\providecommand{\U}[1]{\protect\rule{.1in}{.1in}}
\newtheorem{theorem}{Theorem}[section]
\newtheorem{proposition}[theorem]{Proposition}
\newtheorem{corollary}[theorem]{Corollary}
\newtheorem{example}[theorem]{Example}
\newtheorem{examples}[theorem]{Examples}
\newtheorem{final remark}[theorem]{Final Remark}
\newtheorem{definition}[theorem]{Definition}
\begin{document}

\title{Infinite dimensional spaces consisting of sequences that do not converge to zero}
\author{Mikaela Aires\thanks{Supported by a CNPq scholarship}~~and Geraldo Botelho\thanks{Supported by FAPEMIG grants RED-00133-21 and APQ-01853-23. \newline 2020 Mathematics Subject Classification: 46B42, 46B87, 47B07, 47H60.\newline Keywords: Banach sequence spaces, Banach sequence lattices, almost pointwise spaceability, vector topology. }}
\date{}
\maketitle

\begin{abstract} Given a map $f \colon E \longrightarrow F$ between Banach spaces (or Banach lattices), a set $A$ of $E$-valued bounded sequences, ${\bf x} \in A$ and a vector topology $\tau$ on $F$, we investigate the existence of an infinite dimensional Banach space (or Banach lattice) containing a subsequence of ${\bf x}$ and consisting, up to the origin, of sequences $(x_j)_{j=1}^\infty$ belonging to $A$ such that $(f(x_j))_{j=1}^\infty$ does not converge to zero with respect to $\tau$. The applications we provide encompass the improvement of known results, as well as new results, concerning Banach spaces/Banach lattices not satisfying classical properties and linear/nonlinear maps not belonging to well studied classes.
%
%
\end{abstract}

\section{Introduction}

One of the problems in the field of lineability (see \cite{Aron}) is the following: given a subset $A$ of a Banach space (or a topological vector space) $E$, is there a closed infinite dimensional subspace $W$ of $E$ contained in $A \cup \{0\}$? If yes, the set $A$ is said to be {\it spaceable}. Many proofs start with a vector $x \in A$ and then manipulate $x$ conveniently to construct the subspace $W$. Sometimes, the mother vector $x$ does not belong to $W$. In \cite{Pellegrino}, the set $A$ is said to be {\it pointwise spaceable} if, regardless of the vector $x \in A$, there is a closed infinite dimensional subspace $W$ of $E$ contained in $A \cup \{0\}$ and containing $x$. Pointwise lineability was further developed in, e.g., \cite{bernal, calderon, viniciuspams, liu, anselmogeivison}. In the setting of sequence spaces, sometimes one can get a bit more than spaceability and a bit less than pointwise lineability: maybe that, given a sequence $x \in A$, one can find a closed infinite dimensional subspace $W$ of $E$ contained in $A \cup \{0\}$ and containing {\it a subsequence} of $x$. In this case, we shall say that the set $A$ is {\it almost pointwise spaceable}.

 The motivation to study this concept is the fact that, quite often, it is known that a set is spaceable but not if it is (almost) pointwise spaceable. Let us give an illustrative example: On the one hand, it is easy to see that the set
$${\cal C} = \left\{ (a_j)_{j=1}^\infty \in \ell_\infty : a_{2j-1}\!\cdot\!a_{2j}= 0 \mbox{ for every } j \mbox{ and } a_{2j-1} + a_{2j} \not\longrightarrow 0 \right\} $$
is spaceable (of course, $a_j \not\longrightarrow 0$ means that the sequence of numbers $(a_j)_{j=1}^\infty$ does not converge to zero). On the other hand, it seems to us that it is not obvious whether or not $\cal C$ is (almost) pointwise spaceable. As an application of our main theorem, using a map that is neither linear nor a homogeneous polynomial, we shall show in Example \ref{exk} that $\cal C$ is almost pointwise spaceable.

The purpose of this paper is to prove a general theorem (cf. Theorem \ref{main}) that gives conditions on a (non necessarily linear) map $f \colon E \longrightarrow F$ between Banach spaces, on a subset $A$ of $\ell_\infty(E)$ and on a vector topology $\tau$ on $F$ so that the set of $E$-valued sequences $(x_j)_{j=1}^\infty$ belonging to $A$ such that $(f(x_j))_{j=1}^\infty$ does not converge to zero with respect to $\tau$ is almost pointwise spaceable in $\ell_\infty(E)$. If $E$ is a Banach lattice, then given any positive sequence ${\bf x} \in A$, the general theorem assures  the existence of an infinite dimensional closed {\it sublattice} of $\ell_\infty(E)$ contained in $A \cup \{0\}$ and containing a subsequence of ${\bf x}$ (cf. Theorem \ref{ret}). The proof of our general theorem is a refinement of a technique introduced by Jim\'enez-Rodr\'iguez \cite{Jimenez}.

In Section 3, devoted to applications to Banach space theory, we obtain corollaries about non-completely continuous operators; Banach spaces failing the Schur property; non-weakly sequentially continuous polynomials; Banach spaces failing the polynomial Schur property; sets of weak$^*$-null non-norm null sequences in dual Banach spaces; non-$p$-convergent operators; Banach spaces failing the Grothendieck property; and non-pseudo weakly compact operators.

In Section 4, devoted to applications to Banach lattice theory, we prove corollaries about sets of order bounded (or norm bounded), disjoint, non-norm null sequences; Banach lattices failing the following properties: disjoint Grothendieck property, positive Schur property, dual positive Schur property, positive Grothendieck property; and to operators not belonging to the following classes:  order weakly compact operators, $M$-weakly compact operators, almost Dunford-Pettis operators, and weak $M$ weakly compact operators.

Some of the applications above improve known results and some are the first lineability-type results obtained to their specific situations.

Throughout the paper, Banach spaces are real or complex and Banach lattices are always real. The topological dual of a Banach space/lattice $E$ shall be denoted by $E^*$. By an operator we mean a bounded linear operator, and $T^*$ stands for the adjoint of the operator $T$. For the language and notation of Banach space theory, we refer to \cite{livro, megginson}. For topological vector spaces, see \cite[Chapter 8]{livro}; for homogeneous polynomials, see \cite{dineen, mujica}; for Banach lattices, see \cite{Ali, Ali1, Meyer}.

\section{Main result and first consequences}

As mentioned in the Introduction, we introduce a concept which, in spaces consisting of Banach-valued sequences, lies between spaceability and pointwise spaceability. Let $E$ be a Banach space. A linear subspace of $E^{\mathbb{N}}$ endowed with a complete norm shall be called a {\it Banach sequence space}. Examples: (i) The spaces $c_0(E)$ of $E$-valued norm null sequences, $c_0^w(E)$ of $E$-valued weakly null sequences, and $\ell_\infty(E)$ of bounded $E$-valued sequences, are Banach sequence spaces with the supremum norm. (ii) For $1 \leq p < \infty$, the spaces $\ell_p(E)$ of absolutely $p$-summable $E$-valued sequences and $\ell_p^w(E)$ of weakly $p$-summable $E$-valued sequences are Banach sequence spaces endowed with their natural norms (see \cite{diestel+jarchow+tonge}). In this paper we are interested in Banach sequence spaces consisting of sequences which do not converge to zero in certain topologies.

\begin{definition}\rm A nonvoid subset $A$ of a Banach sequence space $X$ is said to be {\it almost pointwise spaceable} if, for every sequence ${\bf x} \in A$, there exists a closed infinite dimensional subspace of $X$ contained in $A \cup \{0\}$ and containing a subsequence of ${\bf x}$.
\end{definition}


\begin{definition} \rm A map $f \colon E \longrightarrow F$ between topological vector spaces is said to be of {\it homogeneous type} if $f$ is continuous at $0$ and there exists a nonzero integer number $n$ such that $f(\lambda x) = \lambda^n f(x)$ for every scalar $\lambda \neq 0$ and every $x \in E$.

It is easy to see that $f(0) = 0$ in this case.
\end{definition}

\begin{example} \rm (a) Continuous linear operators between topological vector spaces are maps of homogeneous type.\\
(b) A map $P \colon E \longrightarrow F$ between topological vector spaces is a {\it homogeneous polynomial} if there are $n \in \mathbb{N}$ and an $n$-linear operator $A \colon E^n \longrightarrow F$ such that $P(x) = A(x, \ldots, x)$ for every $x \in E$ (see \cite{dineen}). Continuous homogeneous polynomials are maps of homogeneous type.\\
(c) A quite useful map of homogeneous type, which is neither a linear operator nor a homogenous polynomial, shall be presented in Example \ref{exk}.
\end{example}

\begin{definition}\rm  Let $E$ be a Banach space. A subset $A$ of $\ell_\infty(E)$ is said to be:\\
(i) {\it Subsequence invariant} if subsequences of sequences belonging to $A$ belong to $A$ as well. \\
(ii) {\it $\ell_\infty$-complete} if, for $(x_j)_{j=1}^\infty \in A$ and $(\alpha_j)_{j=1}^\infty \in \ell_\infty$, it holds $(\alpha_j x_j)_{j=1}^\infty \in A$.
\end{definition}

Plenty of subsequence invariant and $\ell_\infty$-complete sets shall appear along the paper.

A topology $\tau$ on a linear space shall be called a {\it vector topology} if $(E, \tau)$ is a topological vector space. In this case, a sequence $(x_j)_{j=1}^\infty$ in $E$ that converges to zero with respect to $\tau$ shall be called a $\tau$-null sequence, in symbols, $x_j \stackrel{\tau}{\longrightarrow} 0$. Otherwise, the sequence is called non-$\tau$-null, in symbols $x_j \stackrel{\tau}{\not\longrightarrow} 0$. For the norm topology on a normed space we simply write $x_j \longrightarrow 0$ and $x_j \not\longrightarrow 0$.

Given two topologies $\tau$ and $\tau'$ on the same set, we say that $\tau$ is {\it weaker} than $\tau'$ if $\tau \subseteq \tau'$. The weak topology on a normed space and the weak$^*$ topology on a dual space shall be denoted by $\omega$ and $\omega^*$, respectively.

As pointed out in the Introduction, the proof of our main result is a refinement of an argument due to Jim\'enez-Rodr\'iguez \cite{Jimenez}.

\begin{theorem} \label{main} Let $f \colon E \longrightarrow F$ be a map of homogeneous type between Banach spaces, let $A$ be a subsequence invariant and $\ell_\infty$-complete subset of $\ell_\infty(E)$, and let $\tau$ be a vector topology on $F$ weaker than the norm topology. Then the subset $\cal C$ of sequences $(x_j)_{j=1}^\infty$ in $A$ for which $(f(x_j))_{j=1}^\infty$ is non-$\tau$-null is empty or almost pointwise spaceable in $\ell_\infty (E)$.

 Moreover,
${\cal C} \cap c_0^w(E)$ is empty or almost pointwise spaceable in $c_0^w(E)$.
\end{theorem}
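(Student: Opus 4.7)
The plan is as follows. Assume $\mathcal{C}\neq\emptyset$ and fix $\mathbf{x}=(x_j)_{j=1}^\infty\in\mathcal{C}$. Since $(f(x_j))_j$ is non-$\tau$-null, I extract a $\tau$-neighborhood $V$ of $0$ in $F$ and a subsequence $(x_{j_k})_k$ with $f(x_{j_k})\notin V$ for every $k$. Because $f$ is norm-continuous at $0$ with $f(0)=0$, and $\tau$ is weaker than the norm topology, $\|x_{j_k}\|\to 0$ would force $f(x_{j_k})\to 0$ in norm, hence $\tau$-null, contradicting $f(x_{j_k})\notin V$. Passing to a further subsequence $\mathbf{x}''=(x''_k)_k$, I may therefore assume a uniform lower bound $\|x''_k\|\geq\delta>0$; subsequence invariance gives $\mathbf{x}''\in A$, and still $f(x''_k)\notin V$ for all $k$.

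I now partition $\mathbb{N}$ into a disjoint family $(N_i)_{i\geq 1}$ of infinite sets, set $\mathbf{z}_0:=\mathbf{x}''$, and, for each $i\geq 1$, define $\mathbf{z}_i\in\ell_\infty(E)$ by its $k$-th coordinate being $x''_k$ if $k\in N_i$ and $0$ otherwise. By $\ell_\infty$-completeness each $\mathbf{z}_i$ lies in $A$, and the lower bound $\|x''_k\|\geq\delta$ makes $\{\mathbf{z}_i\}_{i\geq 0}$ linearly independent. Let $W$ denote the closed linear span of $\{\mathbf{z}_i\}_{i\geq 0}$ in $\ell_\infty(E)$. Then $W$ is closed, infinite dimensional, and contains the subsequence $\mathbf{x}''=\mathbf{z}_0$ of $\mathbf{x}$, so it remains to prove $W\subseteq\mathcal{C}\cup\{0\}$.

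Given a nonzero $\mathbf{w}\in W$, write $\mathbf{w}=\lim_n\mathbf{y}_n$ with each $\mathbf{y}_n=a_0^{(n)}\mathbf{z}_0+\sum_{i\geq 1} a_i^{(n)}\mathbf{z}_i$ a finite linear combination (setting $a_i^{(n)}=0$ for unused $i$). For each $i\geq 1$ and each $k\in N_i$ one has $(\mathbf{y}_n)_k=(a_0^{(n)}+a_i^{(n)})x''_k$. Combining coordinate-wise convergence $(\mathbf{y}_n)_k\to\mathbf{w}_k$ with a Hahn-Banach functional $\phi_k\in E^*$ satisfying $\phi_k(x''_k)=\|x''_k\|$ forces $a_0^{(n)}+a_i^{(n)}$ to tend to a scalar $\beta_i$ independent of $k\in N_i$, yielding $\mathbf{w}_k=\beta_i x''_k$. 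Hence $\mathbf{w}=(\alpha_k x''_k)_k$ with $\alpha_k:=\beta_{i(k)}$ (where $k\in N_{i(k)}$) bounded by $\|\mathbf{w}\|/\delta$, so $\mathbf{w}\in A$ by $\ell_\infty$-completeness. Finally, since $\mathbf{w}\neq 0$ there is $i_0$ with $\beta_{i_0}\neq 0$; for every $k\in N_{i_0}$, $f(\mathbf{w}_k)=\beta_{i_0}^n f(x''_k)\notin\beta_{i_0}^n V$, which is again a $\tau$-neighborhood of $0$, so $(f(\mathbf{w}_k))_k$ is non-$\tau$-null and $\mathbf{w}\in\mathcal{C}$.

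For the moreover part, if $\mathbf{x}\in c_0^w(E)$ then $\mathbf{x}''$ is weakly null, and each $\mathbf{z}_i$ is obtained from $\mathbf{x}''$ by zeroing some coordinates, hence is also weakly null; since $c_0^w(E)$ is a norm-closed subspace of $\ell_\infty(E)$, one has $W\subseteq c_0^w(E)$, and the previous paragraph yields $W\subseteq(\mathcal{C}\cap c_0^w(E))\cup\{0\}$. The hard step is the third paragraph: pinning down the exact form of a general element of the closed span requires both the uniform lower bound $\|x''_k\|\geq\delta$ (itself a consequence of the continuity of $f$ at $0$ together with $\tau$ being weaker than the norm topology) and Hahn-Banach, which together promote coordinate-wise convergence in $E$ to convergence of the scalar coefficients that are constant on each block $N_i$.
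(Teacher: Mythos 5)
Your proof is correct, and it reaches the conclusion by a genuinely different construction than the paper's. Both arguments share the same initial reduction: pass to a subsequence on which the $f$-images stay outside a fixed $\tau$-neighborhood (the paper phrases this as ``no subsequence of $(f(y_j))$ is $\tau$-null''), then use continuity of $f$ at $0$ together with $\tau\subseteq\|\cdot\|$ to get the lower bound $\|x''_k\|\ge\delta$. After that the routes diverge. The paper builds a single operator $T\colon\ell_\infty\to\ell_\infty(E)$, $T((a_j))=(a_{G(j+1)}x_j)$, where $G(q)$ is the index of the smallest prime factor of $q$; the point is that any nonzero coefficient $a_m$ recurs along the infinite index set $\{p^k-1\}_k$, which immediately exhibits a subsequence $f(a_mx_{p^k-1})=a_m^nf(x_{p^k-1})$ that cannot be $\tau$-null, and the two-sided estimate $\delta\|{\bf a}\|_\infty\le\|T{\bf a}\|_\infty\le L\|{\bf a}\|_\infty$ makes $T$ an isomorphism onto a closed copy of $\ell_\infty$. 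You instead take the closed span of the block restrictions $\mathbf{z}_i$ together with $\mathbf{z}_0=\mathbf{x}''$ and must then do extra work -- the Hahn--Banach norming functionals combined with $\|x''_k\|\ge\delta$ -- to pin down the exact form $(\beta_{i(k)}x''_k)_k$ of a general element of the closed span; once that form is known, membership in $A$ and non-$\tau$-nullity follow just as in the paper (constancy of the coefficient on an infinite block plays the role of the recurrence of $a_m$ along $\{p^k-1\}$). The trade-off: the paper's subspace is isomorphic to $\ell_\infty$, hence nonseparable, which is a quantitatively stronger form of spaceability and is reused verbatim in the lattice version (Theorem \ref{ret}), where $T$ being a Riesz homomorphism is checked directly from its formula; your $W$ is separable (essentially a copy of $c$), which fully suffices for the statement as written but yields a smaller space and would need a separate verification to produce a sublattice in the ordered setting.
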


\begin{proof} Suppose that $\mathcal{C}$ is nonempty and pick $(z_j)_{j=1}^\infty \in {\cal C}$.  Since $f(z_j) {\stackrel{\tau}{\not\longrightarrow}} 0$ in $F$, by a well known fact about convergent sequences in topological spaces there exists a subsequence $(y_j)_{j=1}^\infty$ of $(z_j)_{j=1}^\infty$ so that
\begin{equation}\label{eqqq} \mbox{no subsequence of } (f(y_j))_{j=1}^\infty \mbox{ converges to } 0 \mbox{ with respect to the topology }\tau.
 \end{equation}
 In particular, $f(y_j) {\stackrel{\tau}{\not\longrightarrow}} 0$, hence $\|f(y_j)\|\not\longrightarrow 0$ because $\tau$ is weaker than the norm topology. Then there are $\varepsilon > 0$ and a subsequence $(x_j)_{j=1}^\infty$ of $(y_j)_{j=1}^\infty$ such that $\|f(x_j)\|\geq \varepsilon$ for every $j$. By the continuity of $f$ in $0$, there exists $\delta > 0$ so that $\|f(x)\| = \|f(x) - f(0)\| < \varepsilon$ for every $x \in E$ with $\|x\| < \delta$. It follows that  $\|x_j\|\geq \delta$ for every $j$. Since $(x_j)_{j=1}^\infty$ is a subsequence of $(z_j)_{j=1}^\infty \in A$ and $A$ is subsequence invariant, we have  $(x_j)_{j=1}^\infty \in A$. As $(f(x_j))_{j=1}^\infty$ is a subsequence of $(f(y_j))_{j=1}^\infty$, from (\ref{eqqq}) we get that  $f(x_j) {\stackrel{\tau}{\not\longrightarrow}} 0$. This proves that $(x_j)_{j=1}^\infty \in {\cal C}. $

Let us consider the set of prime numbers $\{p_k: k \in \mathbb{N}\}$ increasingly ordered and the surjective map $G \colon \mathbb{N} - \{1\} \longrightarrow \mathbb{N}$ given by
$$ G(q) = k,\mbox{ where } p_k = \min\{p : p \mbox{ is prime and divides } q\}.$$
We also consider the map
$$T \colon \ell_\infty \longrightarrow \ell_\infty(E)~,~T((a_j)_{j=1}^\infty) = (a_{G(j+1)}x_j)_{j=1}^\infty.  $$
Since $(a_{G(j+1)})_{j=1}^\infty \in \ell_\infty$, $(x_j)_{j=1}^\infty \in A$ and $A$ is $\ell_\infty$-complete, $T(\ell_\infty) \subseteq A \subseteq \ell_\infty(E)$; in particular, $T$ is well defined. Note that, in the case that $(z_j)_{j=1}^\infty \in c_0^w(E)$, we have $x_j \stackrel{\omega}{\longrightarrow} 0$ (because sequences of weakly null sequences are weakly null) and  $(a_{G(j+1)})_{j=1}^\infty \in \ell_\infty$. Then, for every $x^* \in E^*$,
$$x^*(a_{G(j+1)}x_j) = a_{G(j+1)}x^*(x_j) \longrightarrow 0, $$
which proves that $T((a_j)_{j=1}^\infty)\in c_0^w(E)$. So, $T(\ell_\infty) \subseteq c_0^w(E)$ in the case that ${\cal C} \cap c_0^w(E) \neq \emptyset$.

It is plain that $T$ is linear. As $(x_j)_{j=1}^\infty \in A \subseteq \ell_\infty(E)$, there is $L > 0$ so that $\|x_j\|\leq L$ for every $j$. For every $(a_j)_{j=1}^\infty \in \ell_\infty$, using that $G$ surjective we have $\{|a_j|: j \in \mathbb{N}\} = \{|a_{G(j+1)}|: j \in \mathbb{N}\}$, from which it follows that
\begin{align*}\delta \|(a_j)_{j=1}^\infty\|_\infty & = 
 \sup_j \delta|a_j| = \sup_j \delta |a_{G(j+1)}| \leq \sup_j |a_{G(j+1)}|\cdot\|x_j\|  = \sup_j\|a_{G(j+1)}x_j\|\\& = \|T(a_j)_{j=1}^\infty \|_\infty = \sup_j |a_{G(j+1)}|\cdot\|x_j\| \leq L\cdot \sup_j|a_j| = L \|(a_j)_{j=1}^\infty\|_\infty.
\end{align*}
We have just proved that $T$ is an isomorphism into; in particular $T(\ell_\infty)$ is a closed subspace of $\ell_\infty(E)$ (of $c_0^w(E)$ in the case that ${\cal C} \cap c_0^w(E) \neq \emptyset$) isomorphic to $\ell_\infty$. Since $T((1,1,1,1,\ldots)) = (x_j)_{j=1}^\infty$, $T(\ell_\infty)$ contains a subsequence  of the original sequence $(z_j)_{j=1}^\infty$.

All that is left to prove is that $T(\ell_\infty) \subseteq {\cal C} \cup \{0\}$. We already know that $T(\ell_\infty) \subseteq A$.
Let $0 \neq \xi \in T(\ell_\infty)$ be given, say $\xi = T((a_j)_{j=1}^\infty) = (a_{G(j+1)}x_j)_{j=1}^\infty$ for some sequence $(a_j)_{j=1}^\infty \in \ell_\infty$. As $\xi \neq 0$, there is $j$ such that $a_{G(j+1)} \neq 0$. Calling $p$ the smallest prime number that divides $j+1$ and $m$ the position of $p$ at the list of prime numbers, we have
$$m = G(j+1) = G(p) = G(p^2) = G(p^3) = \cdots = G(p^k) = \cdots  .$$
Therefore, $0 \neq a_{G(j+1)}= a_m = G(p^k)$ for every $k$. Then $(f(a_m x_{p^k-1}))_{k=1}^\infty = (f(a_{G(p^k)} x_{p^k-1}))_{k=1}^\infty$ is a subsequence of $(f(a_{G(j+1)}x_j))_{j=1}^\infty$. Suppose that $f(a_m x_{p^k-1}) \stackrel{\tau}{\longrightarrow} 0$. Calling $n$ the nonzero integer number that works for $f$ in definition of maps of homogeneous type, using that $\tau$ is a vector topology and that $a_m \neq 0$, we get
$$f(x_{p^k-1}) =  f\left(\frac{a_m}{a_m}x_{p^k-1}\right) = \frac{1}{a_m^n} f(a_m x_{p^k-1}) \stackrel{\tau}{\longrightarrow} \frac{1}{a_m^n}\cdot 0 = 0.   $$
Noting that $(x_{p^k-1})_{k=1}^\infty$ is a subsequence of $(x_j)_{j=1}^\infty$, which, in its turn, is a subsequence of $(y_j)_{j=1}^\infty$, it follows that $(f(x_{p^k-1}))_{k=1}^\infty$ is a subsequence of $(f(y_j))_{j=1}^\infty$. By (\ref{eqqq}), the convergence $f(x_{p^k-1}) \stackrel{\tau}{\longrightarrow} 0$ does not hold. This contradiction yields that $(f(a_m x_{p^k-1}))_{k=1}^\infty$ is a non-$\tau$-null sequence. Since $(f(a_m x_{p^k-1}))_{k=1}^\infty $ is a subsequence of $(f(a_{G(j+1)}x_j))_{j=1}^\infty$, we have that $(f(a_{G(j+1)}x_j))_{j=1}^\infty$ is a non-$\tau$-null sequence as well. This proves that $\xi \in {\cal C}$ and completes the proof.
\end{proof}

It is worth noting that, in the proof above, we had to pass to a subsequence in order to get a seminormalized sequence, which is a necessary step for the operator $T$ to be an isomorphism into. So, if we can pick a seminormalized sequence at the begining of the proof, then the whole sequence, and not only a subsequence of it, shall belong to the resulting space $T(\ell_\infty)$. This remark leads to the following consequence of the proof.

\begin{corollary}\label{ncor} Let $f, A, \tau$ and $\cal C$ be as in Theorem {\rm \ref{main}}. If there is a seminormalized sequence $\bf x$ belonging to $\cal C$, then there exists a closed infinite dimensional subspace of $\ell_\infty(E)$ (or $c_0^w(E)$) contained in ${\cal C} \cup \{0\}$ and containing $\bf x$.
\end{corollary}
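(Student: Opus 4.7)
The plan is to revisit the proof of Theorem \ref{main} and observe which subsequence-passes become unnecessary when the starting sequence is already seminormalized. The proof of Theorem \ref{main} performs two reductions: a first pass from $(z_j)_{j=1}^\infty$ to $(y_j)_{j=1}^\infty$ to secure property (\ref{eqqq}), and a second pass from $(y_j)_{j=1}^\infty$ to $(x_j)_{j=1}^\infty$ to arrange $\|x_j\| \geq \delta$ via continuity of $f$ at $0$. Since ${\bf x} = (x_j)_{j=1}^\infty$ is by hypothesis bounded below in norm by some $\delta > 0$ and above by some $L$, the second reduction is automatically unnecessary. The remaining task is to engineer the construction so that the first reduction also does not cost us ${\bf x}$ itself.

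To do so, I shall replace the indexing $j \mapsto G(j+1)$ used in Theorem \ref{main} by a surjection tailored to ${\bf x}$. Since $f(x_j) \stackrel{\tau}{\not\longrightarrow} 0$, there exist a $\tau$-neighborhood $U$ of $0$ in $F$ and an infinite set $J \subseteq \mathbb{N}$ with $f(x_j) \notin U$ for every $j \in J$. Partition $J$ into pairwise disjoint infinite subsets $J_1, J_2, J_3, \ldots$, partition $\mathbb{N} \setminus J$ arbitrarily as $K_1, K_2, \ldots$ (some possibly empty), and define a surjection $H \colon \mathbb{N} \longrightarrow \mathbb{N}$ by $H(j) = m$ whenever $j \in J_m \cup K_m$. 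Each fiber $H^{-1}(m)$ then contains the infinite subset $J_m$ of $J$. The operator corresponding to $T$ in Theorem \ref{main}'s proof now becomes
$$T \colon \ell_\infty \longrightarrow \ell_\infty(E), \qquad T((a_j)_{j=1}^\infty) = (a_{H(j)} x_j)_{j=1}^\infty,$$
and satisfies $T((1,1,1,\ldots)) = {\bf x}$, placing ${\bf x}$ itself inside $T(\ell_\infty)$.

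Linearity of $T$ is immediate, and the estimates $\delta \|(a_j)_{j=1}^\infty\|_\infty \leq \|T((a_j)_{j=1}^\infty)\|_\infty \leq L \|(a_j)_{j=1}^\infty\|_\infty$ follow from surjectivity of $H$ combined with seminormalization of ${\bf x}$, exactly mirroring the computation in Theorem \ref{main}; in particular $T(\ell_\infty)$ is a closed infinite dimensional subspace isomorphic to $\ell_\infty$. The inclusion $T(\ell_\infty) \subseteq A$ follows from $\ell_\infty$-completeness of $A$, and when ${\bf x} \in c_0^w(E)$ the corresponding inclusion $T(\ell_\infty) \subseteq c_0^w(E)$ is checked coordinate-by-coordinate against each $x^* \in E^*$, as in the proof of Theorem \ref{main}.

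It remains to show that every nonzero $\xi = T((a_j)_{j=1}^\infty) \in T(\ell_\infty)$ belongs to $\mathcal{C}$. Picking $m$ with $a_m \neq 0$, restrict $(f(a_{H(j)} x_j))_{j=1}^\infty$ to the infinite set $J_m \subseteq H^{-1}(m)$ to obtain the subsequence $(f(a_m x_j))_{j \in J_m}$; if this were $\tau$-null, then homogeneity of $f$ together with $\tau$ being a vector topology would force $f(x_j) = a_m^{-n} f(a_m x_j) \stackrel{\tau}{\longrightarrow} 0$ along $J_m$, contradicting $f(x_j) \notin U$ for $j \in J_m$. Hence $(f(a_{H(j)} x_j))_{j=1}^\infty$ is non-$\tau$-null and $\xi \in \mathcal{C}$. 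The main obstacle in reading the corollary directly off Theorem \ref{main}'s proof is precisely this fiber condition: the prime-based function $G$ need not have each fiber meeting the bad set $J$ infinitely, and the explicit partition of $J$ above is the required workaround.
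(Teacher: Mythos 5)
Your proof is correct, and it takes a genuinely different --- and in fact more careful --- route than the paper's. The paper justifies the corollary solely by the remark preceding it: rerun the proof of Theorem \ref{main} and observe that passing to a subsequence was needed only to make the sequence seminormalized, hence can be skipped. You instead keep all of ${\bf x}$ from the start and replace the prime-based surjection $G$ by a surjection $H$ tailored to an infinite index set $J$ on which $f(x_j)$ stays outside a fixed $\tau$-neighborhood of $0$, arranging that every fiber of $H$ meets $J$ infinitely often. This extra work is not cosmetic. As you note at the end, the proof of Theorem \ref{main} performs two extractions, and the first one --- securing (\ref{eqqq}), i.e.\ that \emph{no} subsequence of $(f(y_j))_{j=1}^\infty$ is $\tau$-null --- has nothing to do with seminormalization; it is precisely what makes the final verification that $T(\ell_\infty)\subseteq\mathcal{C}\cup\{0\}$ work along the index sets $\{p^k-1:k\in\mathbb{N}\}$. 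A seminormalized ${\bf x}\in\mathcal{C}$ need not satisfy (\ref{eqqq}): take $f$ the identity on $\ell_2$, $\tau$ the weak topology, and $x_j=e_1$ for $j$ even, $x_j=e_j$ for $j$ odd; then $T$ built with $G$ sends the nonzero sequence $(1,0,0,\dots)$ to $(e_1,0,e_3,0,e_5,\dots)$, which is weakly null and hence outside $\mathcal{C}\cup\{0\}$. So the paper's one-line derivation is too quick as literally stated, whereas your tailored surjection closes that gap; what the paper's reading buys is brevity, and what yours buys is a complete, self-contained argument for the statement as written.
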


Every consequence of Theorem \ref{main} has a second statement via Corollary \ref{ncor} concerning an initial seminormalized sequence. For simplicity, this second statement shall be explicitly stated only when the underlying map $f$ is the identity operator.

Next we apply the main result, using a map of homogeneous type which is neither a linear operator nor a homogeneous polynomial, to solve the problem stated in the Introduction.

\begin{example}\label{exk}\rm The point is to prove that the set
$${\cal C} = \{ (a_j)_{j=1}^\infty \in \ell_\infty : a_{2j-1}\!\cdot\!a_{2j}= 0 \mbox{ for every } j \mbox{ and } a_{2j-1} + a_{2j} \not\longrightarrow 0\} $$
is almost pointwise spaceable in $\ell_\infty$. Let us see that the map $f\colon \mathbb{R}^2\longrightarrow \mathbb{R}^2$ given by
\begin{align*}
f(x,y)= \left \{ \begin{array}{cl}
(0,y), ~ \hbox{if} ~ x=0,  \\
(y,x), ~ \hbox{if} ~ x \neq 0,
\end{array}\right.
\end{align*}
is of homogeneous type (with $n = 1$): for $\lambda \neq 0$ and $(x,y) \in \mathbb{R}^2$,
\begin{align}\label{7ude}
f(\lambda(x,y))&=\left \{ \begin{array}{cl}
(0,\lambda y), & \hbox{if} ~ x=0,  \\
(\lambda y, \lambda x), & \hbox{if} ~ x \neq 0,
\end{array}\right.
= \lambda\left \{ \begin{array}{ccc}
(0,y), ~ \hbox{if} ~ x=0,  \\
(y,x), ~  \hbox{if} ~ x \neq 0,
\end{array}\right. = \lambda f(x,y).\end{align}
Denoting by $\|\cdot\|_\infty$ the maximum norm on $\mathbb{R}^2$ and using that $f(0,0) = (0,0)$, the inequality $\|f(x,y)\|_\infty \leq \|(x,y)\|_\infty$ gives the continuity of $f$ at $(0,0)$. It is easy to see that $f$ is nonlinear: $f((0,1)+(1,1))=(2,1)\neq (1,2)=f(0,1)+f(1,1)$. By (\ref{7ude}), $f$ is not an $n$-homogeneous polynomial for any $n \geq 2$ \cite[Ex.\,2.C, p.\,16]{mujica}.

It is clear that the set
$$A=\left\{{\big (}(a_j,b_j){\big )}_{j=1}^\infty \in \ell_\infty(\mathbb{R}^2) \colon a_jb_j= 0 \mbox{ for every }j\right\}$$
is subsequence invariant and $\ell_\infty$-complete, and that the set
$$\mathcal{D}=\left\{{\big(}(a_j,b_j){\big )}_{j=1}^\infty \in A \colon f(a_j,b_j) \not\longrightarrow (0,0) \right\}$$
is nonempty. By Theorem \ref{main}, $\cal D$ is almost pointwise spaceable in $\ell_\infty(\mathbb{R}^2)$. 
For any sequence ${\big(}(a_j,b_j){\big )}_{j=1}^\infty \in A$, it holds 
\begin{equation}\label{lm3s}a_j + b_j \longrightarrow 0 \Longleftrightarrow f(a_j , b_j) \longrightarrow (0,0). \end{equation}
Indeed, noting that $a_j = 0$ or $b_j = 0$ for every $j$, we have
\begin{align*}
f(a_j,b_j)= \left \{ \begin{array}{cl}
(0,b_j), ~\hbox{if} ~ a_j=0  \\
(0,a_j), ~\hbox{if} ~ a_j \neq 0
\end{array}\right. = (0, a_j + b_j)
\end{align*}
for every $j$, from which (\ref{lm3s}) follows. Therefore, the set
$${\cal D} =\left\{{\big (}(a_j,b_j){\big )}_{j=1}^\infty \in \ell_\infty(\mathbb{R}^2) \colon a_jb_j= 0 \mbox{ for every }j \mbox{ and } a_j + b_j \not\longrightarrow 0 \right\}$$
is almost pointwise spaceable in $\ell_\infty(\mathbb{R}^2)$. Consider now the isometric isomorphism
$$T \colon \ell_\infty \longrightarrow \ell_\infty(\mathbb{R}^2)~,~T\left((a_j)_{j=1}^\infty \right)= {\big (}(a_{2j-1},a_{2j}){\big )}_{j=1}^\infty. $$
Given a sequence ${\bf x} \in {\cal C}$, we have $T({\bf x}) \in {\cal D}$, so there is a closed infinite dimensional subspace $W$ of $\ell_\infty(\mathbb{R}^2)$ containing a subsequence of $T({\bf x})$ and $W \subseteq {\cal D} \cup \{0\}$. Then, $T^{-1}(W)$ is a closed infinite dimensional subspace of $\ell_\infty$ containing a subsequence of $\bf x$ and $T^{-1}(W) \subseteq {\cal C} \cup \{0\}$. This proves that $\cal C$ is almost pointwise spaceable in $\ell_\infty$.
%
%
\end{example}

In the following sections we shall provide many more concrete applications of the main theorem, as well as of its consequences we will prove next.

Before proceeding, this is the moment to say that, unlike the case of spaceability, pointwise spaceability and almost pointwise spaceability do not pass (automatically) from a set to its supersets. For instance, $c_0$ is pointwise spaceable in $\ell_\infty$ whereas $c_0 \cup \{(1,1,1,1, \ldots)\}$ is not even almost pointwise spaceable in $\ell_\infty$.

Recall that a subset $A$ of a linear space is {\it balanced} if $\lambda A \subseteq A$ for every scalar $\lambda$ with $|\lambda| \leq 1$.

\begin{proposition} \label{prop4}
Let $f \colon E \longrightarrow F$ be a map of homogeneous type between Banach spaces, let $\tau_E$ be a vector topology on $E$ and let $\tau_F$ be a vector topology on $F$ weaker than the norm topology. Then: \\
\noindent {\rm (a)} The set $$\mathcal{C}_1=\{(x_j)_{j=1}^\infty \in \ell_\infty(E) \colon x_j \stackrel{\tau_E}{\longrightarrow} 0 ~\textrm{and}~ f(x_j)\stackrel{\tau_F}{\not \longrightarrow} 0\}$$
is empty or almost pointwise spaceable in $\ell_\infty(E)$. Moreover, ${\cal C}_1 \cap c_0^w(E)$ is empty or almost pointwise spaceable $c_0^w(E)$.

\noindent {\rm (b)} If, in addition, $\tau$ is a vector topology on $F$, then the set
$$\mathcal{C}_2=\{(x_j)_{j=1}^\infty \in \ell_\infty(E) \colon x_j \stackrel{\tau_E}{\longrightarrow} 0, ~ f(x_j) \stackrel{\tau}{\longrightarrow}0 \textrm{ and}~ f(x_j)\stackrel{\tau_F}{\not \longrightarrow} 0\}$$
is empty or almost pointwise spaceable in $\ell_\infty(E)$. Moreover, ${\cal C}_2 \cap c_0^w(E)$ is empty or almost pointwise spaceable in $c_0^w(E)$.
\end{proposition}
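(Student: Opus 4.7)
The plan is to realize $\mathcal{C}_1$ and $\mathcal{C}_2$ as the exceptional sets produced by Theorem \ref{main} for carefully chosen base sets $A$, so that the proof reduces to verifying the two hypotheses of that theorem (subsequence invariance and $\ell_\infty$-completeness) for each choice.

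For part (a), I would take $A_1 = \{(x_j)_{j=1}^\infty \in \ell_\infty(E) : x_j \stackrel{\tau_E}{\longrightarrow} 0\}$. Subsequence invariance is immediate. For $\ell_\infty$-completeness, given $(x_j) \in A_1$ and $(\alpha_j) \in \ell_\infty$ with $M = \sup_j |\alpha_j|$, one needs $\alpha_j x_j \stackrel{\tau_E}{\longrightarrow} 0$. This follows from a standard fact about topological vector spaces: fixing a balanced neighborhood $U$ of $0$ in $(E,\tau_E)$, continuity of scalar multiplication at $0$ yields a balanced neighborhood $W$ of $0$ with $MW \subseteq U$; since $x_j \in W$ eventually and $|\alpha_j/M| \leq 1$ with $W$ balanced, $\alpha_j x_j = M(\alpha_j/M)x_j \in MW \subseteq U$. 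Then $\mathcal{C}_1$ is precisely the exceptional set $\mathcal{C}$ of Theorem \ref{main} associated with $A_1$, and both the $\ell_\infty(E)$ conclusion and the $c_0^w(E)$ conclusion follow directly from that theorem.

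For part (b), take $A_2 = \{(x_j)_{j=1}^\infty \in \ell_\infty(E) : x_j \stackrel{\tau_E}{\longrightarrow} 0 \text{ and } f(x_j) \stackrel{\tau}{\longrightarrow} 0\}$. Subsequence invariance is clear, and the $\tau_E$-nullity of $(\alpha_j x_j)$ under bounded scalar multiplication is handled as in part (a). The new step is to show $f(\alpha_j x_j) \stackrel{\tau}{\longrightarrow} 0$, and this is where the homogeneity of $f$ earns its keep: for $\alpha_j \neq 0$ one has $f(\alpha_j x_j) = \alpha_j^n f(x_j)$, and for $\alpha_j = 0$ one has $f(\alpha_j x_j) = f(0) = 0$. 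One may assume $n \geq 1$, since if $n < 0$ the relation $f(\lambda x) = \lambda^n f(x)$ is incompatible with continuity at $0$ unless $f \equiv 0$, in which case $\mathcal{C}_2 = \emptyset$ and there is nothing to prove. Hence $(\alpha_j^n)_{j=1}^\infty \in \ell_\infty$, and the balanced-neighborhood argument from (a), applied now in $(F,\tau)$, gives $\alpha_j^n f(x_j) \stackrel{\tau}{\longrightarrow} 0$. Thus $A_2$ is $\ell_\infty$-complete, $\mathcal{C}_2$ is the exceptional set produced by Theorem \ref{main} for $A_2$, and both parts of the conclusion follow. The only nonroutine point in the whole argument is this preservation of the $\tau$-convergence condition under $\ell_\infty$-multiplication; the homogeneity hypothesis on $f$ is tailor-made for it, so everything else is a clean invocation of Theorem \ref{main}.
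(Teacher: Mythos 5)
Your proposal is correct and follows essentially the same route as the paper: both reduce each part to Theorem \ref{main} by verifying subsequence invariance and $\ell_\infty$-completeness of the sets $A_1$ and $A_2$ via the balanced-neighborhood argument, with part (b) handled by replacing the scalar $\|(\alpha_k)_{k=1}^\infty\|_\infty$ by its $n$-th power. Your explicit justification that one may take $n\geq 1$ (since $n<0$ together with continuity at $0$ forces $f\equiv 0$) is a small point the paper passes over silently, and is a welcome addition; just note in part (a) that the case $\sup_j|\alpha_j|=0$ must be set aside before dividing by $M$.
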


\begin{proof} (a) The set $A:=\{(x_j)_{j=1}^\infty \in \ell_\infty (E) \colon  x_j \stackrel{\tau_E}{\longrightarrow} 0 \}$ is obviously subsequence  invariant. By Theorem \ref{main} it is enough to show that $A$ is $\ell_\infty$-complete. 
To do so, let $(a_j)_{j=1}^\infty \in \ell_\infty$ and $(x_j)_{j=1}^\infty \in A$ be given, and let $U$ be a neighborhood of $0$ with respect to $\tau_E$.  If $a_j=0$ for each $j \in \mathbb{N}$, then there is nothing to do  
because $0 \in U$. So, we can assume that 
$\|(a_k)_{k=1}^\infty\|_\infty\neq 0$. 
Since $x_j \stackrel{\tau_E}{\longrightarrow} 0$ and $\tau_E$ is a vector topology, we have $\|(a_k)_{k=1}^\infty\|_\infty x_j \stackrel{\tau_E}{ \longrightarrow} 0$. Since the origin of any topological vector space admits a neighborhood basis consisting of balanced sets  \cite[Proposition 8.1.7]{livro}, there exists a balanced neighborhood $V$ of $0$ such that $V \subseteq U.$ The convergence $\|(a_k)_{k=1}^\infty\|_\infty x_j \stackrel{\tau_E}{ \longrightarrow} 0$ gives a natural number $j_0 \in \mathbb{N}$ so that $\|(a_k)_{k=1}^\infty\|_\infty x_j \in V$ for every $j \geq j_0$. Using that $V$ is balanced and that $\frac{|a_j|}{\|(a_k)_{k=1}^\infty\|_\infty}\leq 1$ for any $j$, we get
$$a_jx_j=\dfrac{a_j}{\|(a_k)_{k=1}^\infty\|_\infty}\|(a_k)_{k=1}^\infty\|_\infty x_j \in V \subseteq U$$
  for every $j \geq j_0$. It follows that 
  $a_jx_j \stackrel{\tau_E}{\longrightarrow} 0$, from which we obtain  $(a_jx_j)_{j=1}^\infty \in A$, proving that $A$ is $\ell_\infty$-complete.

  \medskip

\noindent (b) Again, by Theorem \ref{main} it is enough to check that the set
$$A:=\{(x_j)_{j=1}^\infty \in \ell_\infty (E) \colon  x_j \stackrel{\tau_E}{\longrightarrow} 0 \textrm{ and } f(x_j) \stackrel{\tau}{\longrightarrow} 0 \} $$
is $\ell_\infty$-complete and subsequence invariant. Subsequence invariance is obvious, and $\ell_\infty$-complete-ness follows from  an adaptation of the reasoning used in the proof of (a)  as follows: to prove that  $f(a_j x_j) \stackrel{\tau}{\longrightarrow} 0 $, use that $\tau$ is also a vector topology, let $n$ be the natural number that works in the definition of map of homogeneous type for $f$ and work with the  convergence $\|(a_k)_{k=1}^\infty\|_\infty^n f(x_j) \stackrel{\tau}{ \longrightarrow} 0$ instead of $\|(a_k)_{k=1}^\infty\|_\infty f(x_j) \stackrel{\tau}{ \longrightarrow} 0$. 
\end{proof}

\section{Applications to Banach spaces}

In this section we give applications of our results to Banach spaces failing well studied properties and to operators/homogeneous polynomials not belonging to well studied classes. Linear operators and homogeneous polynomials in this section are always continuous and act between Banach spaces. We shall use (without warning) that linear operators and homogeneous polynomials are maps of homogeneous type and that the weak topology on a Banach space and the weak$^*$ topology on a dual Banach spaces are vector topologies weaker than the norm topology. We begin improving some known results.


A sequence $(x_j)_{j=1}^\infty$ in a Banach space $E$ is {\it polynomially null} if $P(x_j) \longrightarrow 0$ for every scalar-valued homogeneous polynomial $P$ on $E$.

$\bullet$ A Banach space space $E$ has the {\it polynomial Schur property} if polynomially null sequence in $E$ are norm null. Such spaces are also called {\it $\Lambda$-spaces}. This property was introduced by Carne, Cole and Gamelin in \cite{Carne}, further developments  can be found in, eg., \cite{Arraz, JLucasPAMS, JLPL, Farmer, Jaramillo}.

$\bullet$ A bounded linear operator is {\it completely continuous} if it sends weakly null sequences to norm null sequences. A continuous homogeneous polynomial is {\it weakly sequentially continuous} if it sends weakly null sequences to norm null sequences. These notions are classic and the literature about them is vast.

\begin{corollary}\label{fcor} {\rm (a)} Let $T \colon E \longrightarrow F$ be a non-completely continuous operator between Banach spaces. Then the set of $E$-valued weakly null sequences $(x_j)_{j=1}^\infty$ such that $T(x_j) \not\longrightarrow 0$ is almost pointwise spaceable in $c_0^w(E)$.

Furthermore, if $E$ fails the Schur property and $\bf x$ is a seminormalized weakly null $E$-valued sequence, then there exists a closed infinite dimensional subspace of $c_0^w(E)$ consisting, up to the origin, of weakly null non-norm null sequences containing $\bf x$.\\
{\rm (b)} Let $P \colon E \longrightarrow F$ be a non-weakly sequentially continuous homogeneous polynomial. Then the set of $E$-valued weakly null sequences $(x_j)_{j=1}^\infty$ such that $P(x_j) \not\longrightarrow 0$ is almost pointwise spaceable in $c_0^w(E)$. \\
{\rm (c)} Let $E$ be a Banach space failing the polynomial Schur property. Then the set of $E$-valued polynomially null non-norm null sequences is almost pointwise spaceable in $c_0^w(E)$.

 Furthermore, if $\bf x$ is a seminormalized polynomially null $E$-valued sequence, then there exists a closed infinite dimensional subspace of $c_0^w(E)$ consisting, up to the origin, of polynomially null non-norm null sequences containing $\bf x$.
\end{corollary}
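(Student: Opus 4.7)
The plan is to apply Proposition \ref{prop4}(a) for parts (a) and (b), and to apply Theorem \ref{main} directly, with a carefully chosen subsequence invariant and $\ell_\infty$-complete set $A$, for part (c). The ``furthermore'' clauses in (a) and (c) will be obtained from Corollary \ref{ncor} applied to the identity of $E$.

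For (a), I would take $f=T$, $\tau_E=\omega$ (the weak topology on $E$), and $\tau_F$ equal to the norm topology on $F$, which is (trivially) weaker than itself in the sense of the paper. The set $\mathcal{C}_1$ of Proposition \ref{prop4}(a) is then exactly the set to be shown almost pointwise spaceable. Non-complete-continuity of $T$ produces a weakly null sequence $(x_j)$ in $E$ with $T(x_j)\not\to 0$; such a sequence is bounded by the uniform boundedness principle, so $(x_j)\in\mathcal{C}_1\cap c_0^w(E)$, and the ``moreover'' clause of Proposition \ref{prop4}(a) delivers almost pointwise spaceability in $c_0^w(E)$. For the ``furthermore'', the identity on $E$ is non-completely continuous precisely when $E$ fails the Schur property; a seminormalized weakly null $\mathbf{x}$ is itself a seminormalized element of the corresponding $\mathcal{C}_1\cap c_0^w(E)$, so Corollary \ref{ncor} produces the desired closed infinite dimensional subspace of $c_0^w(E)$ through $\mathbf{x}$. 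Part (b) is entirely analogous, replacing $T$ by $P$ and using that continuous homogeneous polynomials are maps of homogeneous type.

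For part (c) I would fix $f$ to be the identity of $E$, take $\tau_F$ to be the norm topology on $E$, and define
\[
A=\{(x_j)_{j=1}^\infty\in\ell_\infty(E)\colon P(x_j)\to 0\text{ for every scalar-valued homogeneous polynomial }P\text{ on }E\}.
\]
Subsequence invariance is immediate. For $\ell_\infty$-completeness, given $(a_j)\in\ell_\infty$, $(x_j)\in A$ and an $n$-homogeneous polynomial $P$, the identity $P(a_j x_j)=a_j^n P(x_j)$, together with the boundedness of $(a_j^n)$ and the convergence $P(x_j)\to 0$, yields $P(a_j x_j)\to 0$, so $(a_j x_j)\in A$. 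The set $\mathcal{C}$ produced by Theorem \ref{main} is then exactly the set of polynomially null non-norm null sequences in $E$, and it is nonempty because $E$ fails the polynomial Schur property. Since every continuous linear functional is a $1$-homogeneous polynomial, $A\subseteq c_0^w(E)$, whence $\mathcal{C}=\mathcal{C}\cap c_0^w(E)$, and the ``moreover'' clause of Theorem \ref{main} gives almost pointwise spaceability in $c_0^w(E)$. The ``furthermore'' comes from Corollary \ref{ncor}: a seminormalized polynomially null $\mathbf{x}$ lies in $\mathcal{C}$, so the corollary produces a closed infinite dimensional subspace of $c_0^w(E)$ through $\mathbf{x}$ contained in $\mathcal{C}\cup\{0\}$.

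The main (though modest) obstacle is the $\ell_\infty$-completeness verification in (c); this is essentially a one-line consequence of $n$-homogeneity, and it works \emph{only} because the polynomial Schur property is defined using \emph{homogeneous} polynomials, so that scalar factors exit $P$ as $a_j^n$ and mere boundedness of $(a_j)$ suffices.
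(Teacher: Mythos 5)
Your proposal is correct and follows essentially the same route as the paper: items (a) and (b) via Proposition \ref{prop4} with $\tau_E=\omega$ and $\tau_F$ the norm topology, item (c) via Theorem \ref{main} applied to the identity with $A$ the set of polynomially null sequences (whose $\ell_\infty$-completeness is checked by the same identity $P(a_jx_j)=a_j^nP(x_j)$), and the ``furthermore'' clauses via Corollary \ref{ncor}. The observation that polynomially null sequences are weakly null, so everything lives in $c_0^w(E)$, is also exactly the paper's.
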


\begin{proof} Note that polynomially null sequences are weakly null, so everything in all three items happens within $c_0^w(E)$. Items (a) and (b) follow from Proposition \ref{prop4}. It is obvious that the set of $E$-valued polynomially null sequences is subsequence invariant. To obtain (c) from Theorem \ref{main} we just have to check that this set is $\ell_\infty$-complete. Indeed, given a scalar-valued $m$-homogeneous polynomial $P$ on $E$, a polynomially null sequence $(x_j)_{j=1}^\infty$ in $E$ and $(a_j)_{j=1}^\infty \in \ell_\infty$, $P(a_jx_j) = a_j^mP(x_j) \longrightarrow 0$ because $P(x_j) \longrightarrow 0$ and $(a_j^m)_{j=1}^\infty$ is bounded. The second statements of (a) and (c) follow from Corollary \ref{ncor}.
\end{proof}

Item (a) of the corollary above improves \cite[Corollary 2.10(a)]{michigan}. The second statement of (a) improves \cite[Theorem 2.1]{Jimenez}, which is the original source of the technique we used to prove our main result. Item (b) improves \cite[Corollary 2.10(b)]{michigan} and item (c) improves \cite[Theorem 2.1(a)]{JLPL}.

\begin{examples}\rm Non-completely continuous operators and non-weakly sequentially continuous homogeneous polynomials are abundant in the literature. The following spaces fail the polynomial Schur property: $L_1[0,1], \ell_\infty, c_0$ and Banach spaces containing a copy of $c_0$ (see \cite[Theorems 6.2, 6.5 and 7.5]{Carne}).
\end{examples}

From now on, we shall give applications of our results to situations that, to the best of our knowledge, have not been handled in the context of lineability thus far.
\begin{corollary} Let $E$ be a separable Banach space and let $T \colon E \longrightarrow E$ be a non-completely continuous operator whose adjoint $T^*$ is completely continuous. Then, the set of weak$^*$-null sequences $(x_j^*)_{j=1}^\infty$ in  $E^*$ for which $T^*(x_j^*) \not\longrightarrow 0$ in $E^*$ is almost pointwise spaceable in $\ell_\infty(E^*)$.
\end{corollary}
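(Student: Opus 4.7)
The strategy is to apply Proposition~\ref{prop4}(a) with the roles of both $E$ and $F$ (in the proposition) taken to be $E^*$, with $f = T^*$, with $\tau_E$ equal to the weak$^*$ topology on $E^*$, and with $\tau_F$ equal to the norm topology on $E^*$ (which is trivially weaker than itself). Since $T^*$ is a bounded linear operator it is of homogeneous type, and $\omega^*$ is a vector topology on $E^*$, so the hypotheses of Proposition~\ref{prop4} are met. The conclusion then delivers exactly the set in question as almost pointwise spaceable in $\ell_\infty(E^*)$, \emph{provided} we can exhibit a single bounded weak$^*$-null sequence in $E^*$ whose $T^*$-image is not norm null.

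Producing that witness is the crux. Since $T$ is not completely continuous, after passing to a subsequence there exist a weakly null sequence $(x_n)_{n=1}^\infty$ in $E$ and $\varepsilon>0$ with $\|T(x_n)\|\geq \varepsilon$ for all $n$. By Hahn--Banach, pick $y_n^*\in E^*$ with $\|y_n^*\|\leq 1$ and $y_n^*(T(x_n))\geq\varepsilon$. Here separability of $E$ enters decisively: the closed unit ball of $E^*$ is weak$^*$-compact and weak$^*$-metrizable, so some subsequence $(y_{n_k}^*)_{k=1}^\infty$ converges weak$^*$ to some $y^*\in E^*$. Set $z_k^* := y_{n_k}^* - y^*$; this sequence is bounded, $z_k^*\stackrel{\omega^*}{\longrightarrow}0$, and
\[
T^*(z_k^*)(x_{n_k}) \;=\; y_{n_k}^*(T(x_{n_k})) - (y^*\circ T)(x_{n_k}) \;\geq\; \varepsilon - (y^*\circ T)(x_{n_k}).
\]
Because $y^*\circ T\in E^*$ and $(x_n)$ is weakly null, the right-most term tends to zero, so for all large $k$ one has $|T^*(z_k^*)(x_{n_k})|\geq \varepsilon/2$; combined with the boundedness of $(x_n)$, this forces $\|T^*(z_k^*)\|\geq \varepsilon/(2\sup_n\|x_n\|)$, hence $T^*(z_k^*)\not\longrightarrow 0$ in norm. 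Thus $(z_k^*)$ lies in the target set, which is therefore nonempty, and Proposition~\ref{prop4}(a) concludes the proof.

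The only genuine obstacle is the construction of the weak$^*$-null witness; the rest is the machinery already in place. It is worth noting that the extra assumption that $T^*$ is completely continuous is not used in the argument above---it serves to make the statement substantive, since otherwise one could simply apply Corollary~\ref{fcor}(a) to $T^*\colon E^*\to E^*$ (using that weakly null sequences in $E^*$ are weak$^*$-null and that $c_0^w(E^*)\subseteq \ell_\infty(E^*)$) and finish directly.
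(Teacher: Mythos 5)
Your argument is correct and follows the same overall route as the paper: both reduce the corollary to Proposition~\ref{prop4}(a) applied to $f=T^*$ on $E^*$ with $\tau_E=\omega^*$ and $\tau_F$ the norm topology, so the entire content is the nonemptiness of the set. The difference lies in how the witness is produced: the paper simply cites \cite[Theorem 3]{Cima}, which under the stated hypotheses supplies a normalized weak$^*$-null sequence whose image under $T^*$ is seminormalized, whereas you reprove existence from scratch via Hahn--Banach and the weak$^*$-sequential compactness of $B_{E^*}$ afforded by separability. Your construction is sound; the only cosmetic point is that over complex scalars the displayed estimate should be run through absolute values, $|T^*(z_k^*)(x_{n_k})|\geq |y_{n_k}^*(T(x_{n_k}))|-|y^*(T(x_{n_k}))|$, which changes nothing. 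Note also that your sequence $(z_k^*)$ is automatically seminormalized, since $\|z_k^*\|\geq \|T^*(z_k^*)\|/\|T\|$ is bounded below, so one could even invoke Corollary~\ref{ncor} to keep the whole witness in the resulting subspace. What your route buys is self-containedness, plus the (correct) observation that the complete continuity of $T^*$ is never used in the proof: it only excludes the trivial source of witnesses, namely weakly null sequences in $E^*$ handled by Corollary~\ref{fcor}(a), and is carried along because it is a hypothesis of the theorem the paper cites.
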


\begin{proof} By \cite[Theorem 3]{Cima}, the set of weak$^*$-null sequences $(x_j^*)_{j=1}^\infty$ in  $E^*$ for which $T^*(x_j^*) \not\longrightarrow 0$ in $F$ is nonempty. Actually, there is a normalized weak$^*$-null sequence $(x_j^*)_{j=1}^\infty$ in  $E^*$ such that  $(T^*(x_j^*))_{j=1}^\infty$ is seminormalized. The result follows from Proposition \ref{prop4}.
\end{proof}

\begin{examples}\rm Let $E$ be an infinite dimensional separable Banach space such that $E^*$ has the Schur property. For instance, $E = c_0$ or $E = \left({\displaystyle \oplus_{j=1}^\infty} E_j\right)_0$, where each $E_j$ is  separable and $E_j^*$ has the Schur property. In particular, $E = \left(\displaystyle{\oplus_{j=1}^\infty} \ell_2^j\right)_0$ is a separable infinite dimensional Banach space whose dual $E^*= \left(\displaystyle{\oplus_{j=1}^\infty} \ell_2^j\right)_1$ is the Stegall space, which has the Schur property.   As an infinite dimensional Banach space and its dual cannot have the Schur property simultaneously (see \cite[Theorem 3]{thakare} or \cite[Theorem 3 and remark thereafter]{diestelDP}), $E$ fails the Schur property. Hence, there are plenty of non-completely continuous operators from $E$ to $E$, all of them having completely continuous adjoints because $E^*$ has the Schur property.
\end{examples}

For the identity operator on a dual Banach space, the next result is more general than the previous corollary.

\begin{corollary} Let $E$ be an infinite dimensional Banach space. Then the set of weak$^*$-null non-norm null sequences in $E^*$ is almost pointwise spaceable in $\ell_\infty(E^*)$.

Furthermore, given a  seminormalized weak$^*$ null $E^*$-valued sequence $\bf x^*$, there is a closed infinite dimensional subspace of $\ell_\infty(E^*)$ consisting, up to the origin, of weak$^*$-null non-norm null sequences and containing $\bf x^*$.
\end{corollary}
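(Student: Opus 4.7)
The plan is to obtain both statements as immediate specializations of Proposition \ref{prop4}(a) and Corollary \ref{ncor}, applied with the ambient Banach space taken to be the dual $E^*$, the map $f$ taken to be the identity operator on $E^*$, the topology $\tau_E$ taken to be the weak$^*$ topology $\omega^*$ on $E^*$, and the topology $\tau_F$ taken to be the norm topology on $E^*$ itself (the norm topology is trivially weaker than or equal to itself). With these choices, the set
$$ \mathcal{C}_1 = \{(x_j^*)_{j=1}^\infty \in \ell_\infty(E^*) : x_j^* \stackrel{\omega^*}{\longrightarrow} 0 \text{ and } x_j^* \not\longrightarrow 0\} $$
is precisely the set of weak$^*$-null non-norm null sequences in $E^*$ whose almost pointwise spaceability we are to establish.

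The hypotheses of Proposition \ref{prop4}(a) are routinely verified: the identity operator is of course of homogeneous type, and $\omega^*$ is a Hausdorff vector topology on $E^*$ that is weaker than the norm topology. The one substantive ingredient is that $\mathcal{C}_1$ is nonempty, and this is exactly where infinite dimensionality of $E$ enters: by the Josefson--Nissenzweig theorem, every infinite-dimensional Banach space admits a normalized weak$^*$-null sequence in its dual. Any such sequence sits in $\mathcal{C}_1$ (normalization forces $\|x_j^*\| = 1 \not\to 0$), so Proposition \ref{prop4}(a) gives the first statement.

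For the furthermore part, let $\mathbf{x}^* = (x_j^*)_{j=1}^\infty$ be a seminormalized weak$^*$-null sequence in $E^*$. Seminormalization yields $\delta > 0$ with $\|x_j^*\| \geq \delta$ for all $j$, so $x_j^* \not\longrightarrow 0$ in norm and hence $\mathbf{x}^* \in \mathcal{C}_1$. Thus $\mathbf{x}^*$ is a seminormalized member of $\mathcal{C}_1$, and Corollary \ref{ncor} produces a closed infinite dimensional subspace of $\ell_\infty(E^*)$ contained in $\mathcal{C}_1 \cup \{0\}$ and containing the whole sequence $\mathbf{x}^*$, as required.

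The only nontrivial step is the appeal to Josefson--Nissenzweig to guarantee nonemptiness of $\mathcal{C}_1$; everything else is bookkeeping that reduces the statement to the general machinery already developed in the paper.
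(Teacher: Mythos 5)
Your proposal is correct and follows essentially the same route as the paper: nonemptiness of the set via the Josefson--Nissenzweig theorem, the first statement from Proposition \ref{prop4} applied to the identity operator on $E^*$ with $\tau_E = \omega^*$, and the furthermore part from Corollary \ref{ncor}. The verification that a seminormalized weak$^*$-null sequence lies in $\mathcal{C}_1$ is the same observation the paper leaves implicit.
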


\begin{proof} The Josefon-Nissenzweig Theorem assures the existence of a normalized weak$^*$ null $E^*$-valued sequence. Using the identity operator on $E^*$, the first statement follows from Proposition \ref{prop4} and the second from Corollary \ref{ncor}.
\end{proof}

In the corollary above, given a closed infinite dimensional subspace $F$ of $E^*$, it is not always true that there is a weak$^*$-null non-norm null sequence belonging to $F$. The obvious example is $E = \ell_1$ and $F = c_0$. To remedy this situation it is enough to avoid copies of $\ell_1$:

\begin{corollary} Let $E$ be Banach space containing no copy of $\ell_1$ and let $F$ be a closed infinite dimensional subspace of $E^*$. Then the set
$${\cal C} = \{(x_j^*)_{j=1}^\infty \in \ell_\infty(F) \colon x_j^* \stackrel{\omega^*}{\longrightarrow} 0 \mbox{ in } E^*  ~\textrm{and}~ x_j^*\not \longrightarrow 0\} $$
is almost pointwise spaceable in $\ell_\infty(F)$.
\end{corollary}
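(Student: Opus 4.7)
The plan is a direct application of Proposition~\ref{prop4}(a), modulo showing that $\mathcal{C}$ is nonempty. I would set up the application as follows: take both Banach spaces in the proposition to be $F$, take $f$ to be the identity map $\mathrm{id}_F$ (which is of homogeneous type with $n=1$), take $\tau_E$ to be the restriction to $F$ of the weak$^*$ topology of $E^*$ (a vector topology on $F$, since $F$ is a linear subspace of the topological vector space $(E^*,\omega^*)$), and take $\tau_F$ to be the norm topology of $F$ (which counts as weaker than itself under the paper's non-strict convention $\tau \subseteq \tau'$). Since $x_j^*$ converges to $0$ in the restricted weak$^*$ topology on $F$ if and only if $x_j^* \stackrel{\omega^*}{\longrightarrow} 0$ in $E^*$, the set $\mathcal{C}_1$ produced by Proposition~\ref{prop4}(a) with these choices coincides with the set $\mathcal{C}$ of the statement.

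What remains is to exhibit a single sequence in $\mathcal{C}$. Here I would invoke the classical extension of the Josefson--Nissenzweig theorem: if $E$ does not contain a copy of $\ell_1$, then every infinite-dimensional closed subspace of $E^*$ contains a normalized sequence that is weak$^*$-null in $E^*$. (This is precisely the result the author alludes to in the remark just before the corollary, where the hypothesis is motivated by the $E=\ell_1$, $F=c_0$ counterexample.) Applying this to the given $F$ produces $(x_j^*)_{j=1}^\infty \subset F$ with $\|x_j^*\| = 1$ and $x_j^* \stackrel{\omega^*}{\longrightarrow} 0$ in $E^*$, which is an element of $\mathcal{C}$.

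The only non-routine step is the nonemptiness argument; the hypothesis ``$E$ contains no copy of $\ell_1$'' is invoked precisely here and nowhere else. Once a single element of $\mathcal{C}$ is in hand, Proposition~\ref{prop4}(a) takes over entirely: its proof has already handled the $\ell_\infty$-completeness and subsequence invariance of the ambient set of weak$^*$-null sequences, and the construction inside Theorem~\ref{main} then produces, for any starting $\mathbf{x}^* \in \mathcal{C}$, the required closed infinite-dimensional isomorphic copy of $\ell_\infty$ inside $\mathcal{C} \cup \{0\}$ containing a subsequence of $\mathbf{x}^*$. Thus I expect the main obstacle to be conceptual (recognizing the correct form of the subspace Josefson--Nissenzweig theorem) rather than technical, and no further verification beyond these steps should be necessary.
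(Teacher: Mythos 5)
Your proposal is correct and follows the paper's strategy: nonemptiness via the subspace version of the Josefson--Nissenzweig theorem for spaces not containing $\ell_1$ (the paper cites Hagler--Johnson, Theorem 1(a), for real scalars and Mujica for complex scalars), followed by an application of Proposition~\ref{prop4}(a) with the identity map and the weak$^*$ topology. The one organizational difference is that you apply the proposition directly with ambient space $F$ and $\tau_E$ the restriction of $\omega^*$ to $F$, so the conclusion lands in $\ell_\infty(F)$ immediately; the paper instead applies the proposition in $\ell_\infty(E^*)$ and then has to reopen the proof of Theorem~\ref{main} to observe that the constructed subspace $T(\ell_\infty)$ consists of sequences with entries in $F$ and is therefore contained in $\ell_\infty(F)$. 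Your variant sidesteps that extra verification and is, if anything, slightly cleaner; both arguments hinge on exactly the same nonemptiness result, which is indeed the only place the hypothesis on $\ell_1$ is used.
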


\begin{proof} The fact that $\cal C$ is nonempty was established in \cite[Theorem 1(a)]{hagler} for real spaces and extended in \cite[Corollary 6]{mujicaarxiv} for complex spaces. Given a sequence ${\bf x^*} \in {\cal C}$, we have ${\bf x^*} \in \ell_\infty(E^*)$, so, using that the norm of $F$ is the norm of $E^*$, by Proposition \ref{prop4} there is a closed infinite dimensional subspace $V$ of $\ell_\infty(E^*)$ formed, up to the origin, by weak$^*$-null sequences non-norm null in $E^*$ and containing ${\bf x^*}$. Since ${\bf x^*} \in \ell_\infty(F)$, the proof of Theorem \ref{main} makes clear that $V$ is contained in $\ell_\infty(F)$, therefore $V \subseteq {\cal C}\cup\{0\}$.
\end{proof}

Let $1 \leq p < \infty$ be given. A sequence $(x_j)_{j=1}^\infty$ in a Banach space $E$ is {\it weakly $p$-summable} if $(x^*(x_j))_{j=1}^\infty \in \ell_p$ for every $x^* \in E^*$ (see \cite{diestel+jarchow+tonge}).

$\bullet$ A linear operator between Banach spaces is {\it $p$-convergent} if it sends weakly $p$-summable sequences to norm null sequences.

This class was introduced in \cite{castillo, castillo2}. The case $p = 1$ was studied, with a different definition, in \cite{gon, pel} under the name {\it unconditionally summing operators}, and, for arbitrary $1 \leq p < \infty$, under the name {\it unconditionally $p$-summing operators} in \cite{jm}. The equivalence of the definitions was proved in \cite[Theorem 1.7]{jm}. Recent developments can be found, e.g., in \cite{ardakani, ariel, chen}.

\begin{corollary} Let $T \colon E \longrightarrow F$ be non-$p$-convergent operator, $1 \leq p < \infty$. Then the set of weakly $p$-summable in $E$ such that $(T(x_j))_{j=1}^\infty$ is non-norm null in $F$ is almost pointwise spaceable in $c_0^w(E^*)$.
\end{corollary}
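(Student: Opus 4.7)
The plan is to apply Theorem \ref{main} directly, taking as the set $A$ the collection of weakly $p$-summable sequences $\ell_p^w(E)$, viewed as a subset of $\ell_\infty(E)$ (weakly $p$-summable sequences are bounded, as is standard). The map of homogeneous type will be $f = T$ itself (a linear operator, so $n=1$), and the vector topology $\tau$ on $F$ will be the norm topology, which is trivially weaker than itself and is therefore admissible in Theorem \ref{main}.

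The first step is to check that $A = \ell_p^w(E)$ is subsequence invariant and $\ell_\infty$-complete. Subsequence invariance is immediate from the definition. For $\ell_\infty$-completeness, given $(x_j)_{j=1}^\infty \in \ell_p^w(E)$ and $(a_j)_{j=1}^\infty \in \ell_\infty$, for every $x^* \in E^*$ one has
\[
\sum_{j=1}^\infty |x^*(a_j x_j)|^p \leq \|(a_k)_{k=1}^\infty\|_\infty^p \sum_{j=1}^\infty |x^*(x_j)|^p < \infty,
\]
so $(a_j x_j)_{j=1}^\infty \in \ell_p^w(E)$. The second step is to observe that the set
\[
{\cal C} = \{(x_j)_{j=1}^\infty \in \ell_p^w(E) : T(x_j) \not\longrightarrow 0 \mbox{ in } F\}
\]
is nonempty: this is exactly the statement that $T$ is not $p$-convergent. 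By Theorem \ref{main}, $\cal C$ is almost pointwise spaceable in $\ell_\infty(E)$.

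Finally, to upgrade to $c_0^w(E)$, I would note that any weakly $p$-summable sequence is automatically weakly null, since $(x^*(x_j))_{j=1}^\infty \in \ell_p$ forces $x^*(x_j) \to 0$ for each $x^* \in E^*$. Hence ${\cal C} \subseteq c_0^w(E)$, so the intersection ${\cal C} \cap c_0^w(E)$ of the \textbf{Moreover} clause of Theorem \ref{main} equals $\cal C$, yielding almost pointwise spaceability of $\cal C$ in $c_0^w(E)$.

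There is no substantive obstacle here; the proof is essentially a dictionary translation between the definition of $p$-convergence and the hypotheses of Theorem \ref{main}. The only point requiring care is verifying that $\ell_p^w(E)$ is $\ell_\infty$-complete, which is the content of the display above and is routine.
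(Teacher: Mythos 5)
Your proposal is correct and follows exactly the route the paper takes: apply Theorem \ref{main} with $A=\ell_p^w(E)$, $f=T$ and $\tau$ the norm topology, after checking that $\ell_p^w(E)$ is subsequence invariant and $\ell_\infty$-complete and observing that weakly $p$-summable sequences are weakly null so the \textbf{Moreover} clause gives the conclusion in $c_0^w(E)$ (the ``$c_0^w(E^*)$'' in the statement is evidently a typo for $c_0^w(E)$, which is the space you, correctly, work in). The paper merely asserts these verifications as obvious, whereas you spell them out; the content is identical.
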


\begin{proof} It is obvious that the set of weakly $p$-summable sequences is a subsequence invariant subset of $c_0^w(E)$. Its $\ell_\infty$-completeness is easily checked. So, the result follows from Theorem \ref{main}.
\end{proof}

\begin{examples}\rm Let $(e_j)_{j=1}^\infty$ denote the sequence of canonical unit vectors in spaces of scalar-valued sequences. Then, $(e_j)_{j=1}^\infty$ is normalized and weakly 1-summable in $c_0$, and it is normalized and weakly $q$-summable in $\ell_q$ for $2 \leq q < \infty$. Therefore, the identity operator on $c_0$ fails to be $p$-convergent for every $1 \leq p < \infty$, and the identity operator on $\ell_q$, $2 \leq q < \infty$,  fails to be $p$-convergent for every $p\geq q$.
\end{examples}

A Banach space has the {\it Grothendieck property} if weak$^*$ null sequences in its dual are weakly null. This property was introduced by Grothendieck \cite{Grothendieck} in 1953 and has been developed since then. For a comprehensive account on the Grothendieck property we refer the reader to \cite{GonzalezKania}.

One more application of Proposition \ref{prop4} and Corollary \ref{ncor} gives the following:

\begin{corollary} Let $E$ be a Banach space failing the Grothendieck property. Then the set of weak$^*$-null non-weaklly null sequences in $E^*$ is almost pointwise spaceable in $\ell_\infty(E^*)$.

Furthermore, given a  seminormalized weak$^*$ null non-weakly null $E^*$-valued sequence $\bf x^*$, there is a closed infinite dimensional subspace of $\ell_\infty(E^*)$ consisting, up to the origin, of weak$^*$-null non-weakly null sequences and containing $\bf x^*$.
\end{corollary}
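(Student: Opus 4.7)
The statement is tailor-made for Proposition \ref{prop4}(a) applied to the identity operator on the dual space. My plan is to set the ambient Banach space of the proposition to be $E^*$, take $f=\mathrm{id}_{E^*}$ (a continuous linear operator, hence of homogeneous type), take $\tau_E$ to be the weak$^*$ topology $\omega^*$ on $E^*$, and take $\tau_F$ to be the weak topology $\omega$ on $E^*$. Both are vector topologies on $E^*$, and $\omega$ is weaker than the norm topology, so the hypotheses of Proposition \ref{prop4}(a) are satisfied. With these choices, the set
$$\mathcal{C}_1 = \{(x_j^*)_{j=1}^\infty \in \ell_\infty(E^*) : x_j^* \stackrel{\omega^*}{\longrightarrow} 0 \text{ and } x_j^* \stackrel{\omega}{\not\longrightarrow} 0\}$$
coincides exactly with the set of weak$^*$-null non-weakly null sequences in $E^*$ that we want to understand.

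The next step is the nonemptiness check. The failure of the Grothendieck property for $E$ means, by definition, that there exists at least one weak$^*$-null sequence in $E^*$ that is not weakly null; such a sequence is automatically bounded (weak$^*$-convergent sequences in a dual Banach space are norm-bounded by the uniform boundedness principle), so it lies in $\ell_\infty(E^*)$ and belongs to $\mathcal{C}_1$. Hence $\mathcal{C}_1 \neq \emptyset$, and Proposition \ref{prop4}(a) delivers almost pointwise spaceability of $\mathcal{C}_1$ in $\ell_\infty(E^*)$, which is exactly the first assertion.

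For the ``furthermore'' part, I would invoke Corollary \ref{ncor}: if the starting sequence $\mathbf{x}^*$ is already seminormalized and lies in $\mathcal{C}_1$, then one does not need to pass to a subsequence at the beginning of the proof of Theorem \ref{main}, so the resulting closed infinite dimensional subspace of $\ell_\infty(E^*)$ that sits inside $\mathcal{C}_1 \cup \{0\}$ contains the whole of $\mathbf{x}^*$, not just a subsequence.

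Honestly, there is no real obstacle here — the work has already been done in Proposition \ref{prop4}(a) and Corollary \ref{ncor}, and all that is required is to recognize that ``weak$^*$-null non-weakly null'' is precisely the $\mathcal{C}_1$ pattern with the dual-space weak and weak$^*$ topologies, together with the observation that failure of the Grothendieck property is exactly the nonemptiness hypothesis. The only mildly non-trivial point worth flagging is the need to know that weak$^*$-convergent sequences in $E^*$ are bounded, so that the witness sequence automatically sits in $\ell_\infty(E^*)$.
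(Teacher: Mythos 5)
Your proposal is correct and matches the paper's intended argument exactly: the paper gives no separate proof, stating only that the corollary is ``one more application of Proposition \ref{prop4} and Corollary \ref{ncor}'', which is precisely your application of Proposition \ref{prop4}(a) with $f=\mathrm{id}_{E^*}$, $\tau_E=\omega^*$, $\tau_F=\omega$, nonemptiness from the failure of the Grothendieck property, and Corollary \ref{ncor} for the seminormalized case. Your added remark that weak$^*$-convergent sequences are norm-bounded (so the witness lies in $\ell_\infty(E^*)$) is a sensible detail the paper leaves implicit.
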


\begin{examples}\rm A separable Banach space has the Grothendieck property if and only if it is reflexive (see \cite[p.\,263]{GonzalezKania}). For being nonreflexive separable spaces, $c_0, \ell_1, L_1[0,1]$ and $C[0,1]$ fail the Grothendieck property.
\end{examples}

 In \cite{Peralta}, the authors consider a locally convex (hence vector) topology $\tau_R$ on a Banach $E$, called the {\it Right topology}, which is the smallest topology on $E$ so that the following equivalence holds: regardless of the Banach space $F$, a linear operator  $T \colon E \longrightarrow F$ is weakly compact if and only if $T \colon (E, \tau_R) \longrightarrow (F, \|\cdot\|)$ is continuous. It is clear that $\tau_R$ lies between the weak topology and the norm topology. 
 The following class of operators was also introduced in \cite{Peralta}, further developments can be found, e.g., in \cite{Ioana2, Ioana, Wen}.  

\medskip

\noindent $\bullet$  A bounded linear operator between Banach spaces is {\it pseudo weakly compact} if it maps $\tau_R$-null sequences in $E$ to norm null sequences in $F$.

These operators are sometimes called {\it Dunford-Pettis completely continuous opearators} (see \cite[Proposition 1]{Ioana}).

\begin{corollary} Let $T \colon E \longrightarrow F$ be a non-pseudo weakly compact operator. Then the set of $\tau_R$-null sequences in $E$ such that $(T(x_j))_{j=1}^\infty$ is non-norm null in $F$ is almost pointwise spaceable in $c_0^w(E)$.
\end{corollary}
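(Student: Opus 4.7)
The plan is to derive the claim as an immediate application of Proposition \ref{prop4}(a). I would take $f=T$ (a continuous linear operator, hence a map of homogeneous type), $\tau_E=\tau_R$ (a vector topology on $E$, since the Right topology is locally convex by construction in \cite{Peralta}), and $\tau_F$ equal to the norm topology on $F$, which trivially qualifies as a vector topology ``weaker than the norm topology'' in the inclusion sense $\subseteq$ adopted throughout the paper.

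With these choices, the set $\mathcal{C}_1$ appearing in Proposition \ref{prop4}(a) is precisely the set of bounded $E$-valued sequences $(x_j)_{j=1}^\infty$ that are $\tau_R$-null and satisfy $T(x_j)\not\longrightarrow 0$ in $F$. Because $\tau_R$ sits between the weak and norm topologies of $E$, every $\tau_R$-null sequence is weakly null and therefore lies in $c_0^w(E)$ (boundedness being automatic from weak convergence). Consequently $\mathcal{C}_1\cap c_0^w(E)=\mathcal{C}_1$ coincides with the set $\mathcal{C}$ appearing in the statement.

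The hypothesis that $T$ is not pseudo weakly compact provides, by definition, at least one $\tau_R$-null sequence in $E$ whose $T$-image is non-norm null; thus $\mathcal{C}$ is nonempty. The ``moreover'' clause of Proposition \ref{prop4}(a) then delivers that $\mathcal{C}_1\cap c_0^w(E)=\mathcal{C}$ is almost pointwise spaceable in $c_0^w(E)$, finishing the argument. I do not foresee a real obstacle here; the only delicate points are bookkeeping, namely recording that $\tau_R$ is finer than the weak topology (so that the ambient space $c_0^w(E)$ is a legitimate choice) and noting that using the norm topology itself as $\tau_F$ is permitted because the hypothesis ``weaker than the norm topology'' allows equality.
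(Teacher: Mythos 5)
Your proposal is correct and follows essentially the same route as the paper: both apply Proposition \ref{prop4}(a) with $\tau_E=\tau_R$ and $\tau_F$ the norm topology, using that $\tau_R$-null sequences are weakly null (so the ``moreover'' clause about $c_0^w(E)$ applies) and that non-pseudo weak compactness of $T$ makes the set nonempty. Your write-up just makes explicit a few bookkeeping points the paper leaves implicit.
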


\begin{proof} Since $\tau_R$ is a vector topology weaker than the norm topology and $\tau_R$-null sequences are weakly null, because the weak topology is weaker than $\tau_R$, the result follows from Proposition \ref{prop4}.
\end{proof}

\begin{examples} \rm Recall that an operator $T \colon E \longrightarrow F$ between Banach spaces:\\
$\bullet$ Is {\it unconditionally converging} if, for every weakly unconditionally Cauchy series $\sum\limits_{j=1}^\infty x_j$ in $E$, the series $\sum\limits_{j=1}^\infty T(x_j)$ is unconditionally convergent in $F$. \\
$\bullet$ {\it Fixes a copy of $c_0$} if there exists a subspace $S$ of $E$ isomorphic to $c_0$ for which the restriction of $T$ to $S$ is an isomorphism onto $T(S)$.

In \cite[Proposition 14]{Peralta} it is proved that pseudo weakly compact operators are unconditionally converging; and, according to \cite[Exercise 8, p.\,64]{Diestel}, an operator fails to be unconditionally converging if and only if it fixes a copy of $c_0$. Therefore, every operator that fixes a copy of $c_0$ fails to be pseudo weakly compact. In particular, the identity operator on every Banach space containing a copy of $c_0$ is a non-pseudo weakly compact operator.
\end{examples}


\section{Applications to Banach lattices}

In this section we shall use freely that, for a Banach lattice $E$, $\ell_\infty(E)$ is a Banach lattice with the pointwise ordering \cite[p.\,183]{Ali1}. Unlike the previous sections, we shall not pass to $c_0^w(E)$ in this section because, in general, $c_0^w(E)$ is not a sublattice of $\ell_\infty(E)$ (see \cite[Remark 2.3(1)]{JLPL}). As usual, $E^+$ denotes the positive cone of the Banach lattice $E$. By a {\it Banach sequence lattice} we mean a linear subspace $X$ of $E^\mathbb{N}$, where $E$ is a Banach lattice, endowed with a complete norm that makes $X$ a Banach lattice with the pointwise ordering.

\begin{definition}\rm Let $A$ be a subset of a Banach sequence lattice $X$ such that $A \cap X^+ \neq \emptyset$. We say that $A$  is {\it almost positive pointwise latticeable} if, for each positive sequence $ {\bf x^*}\in A$, there exists a closed infinite dimensional sublattice of $X$ contained in $A \cup \{0\}$ and containing a subsequence of ${\bf x^*}$.
\end{definition}

To be consistent with the terminology of \cite{JLPL, oikhberg}, we should have written {\it almost positively pointwisely completely latticeable}, but, for simplicity, we shall write {\it almost positive pointwise latticeable}.

Again, it is worth mentioning that almost positive pointwise latticeability does not pass, automatically, from a set to its supersets.

\begin{theorem} \label{ret} Let $f \colon E \longrightarrow F$ be a map of homogeneous type from a Banach lattice $E$ to a Banach space $F$, let $A$ be a subsequence invariant and $\ell_\infty$-complete subset of $\ell_\infty(E)$, and let  $\tau$ be a vector topology on $F$ weaker than the norm topology. Consider the sets
$$ \mathcal{C}_1 =\{(x_j)_{j=1}^\infty \in A \colon   f(x_j){\stackrel{\tau}{\not\longrightarrow}} 0\}~,~\mathcal{C}_2 =\{(x_j)_{j=1}^\infty \in A \colon (x_j)_{j=1}^\infty ~\textrm{is disjoint and}~  f(x_j){\stackrel{\tau}{\not\longrightarrow}} 0\}.$$
For $i=1,2$,  $\mathcal{C}_i\cap \ell_\infty(E)^{+} = \emptyset$ or $C_i$ is almost positive pointwise latticeable in $\ell_\infty(E)$.
\end{theorem}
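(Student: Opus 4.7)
The plan is to re-run the construction of Theorem \ref{main} verbatim and then observe that, when the mother sequence is positive, the operator $T$ built there is automatically a lattice homomorphism, so its image is a closed infinite-dimensional sublattice rather than merely a subspace. Concretely, for $\mathcal{C}_1$, start from a positive ${\bf x^*} \in \mathcal{C}_1 \cap \ell_\infty(E)^+$ and extract, exactly as in the proof of Theorem \ref{main}, a subsequence $(x_j)_{j=1}^\infty$ with $\|f(x_j)\| \geq \varepsilon$ and $\|x_j\| \geq \delta$. Since subsequences of positive sequences are positive, $x_j \geq 0$ for every $j$. Defining $T \colon \ell_\infty \to \ell_\infty(E)$ by $T((a_j)_{j=1}^\infty) = (a_{G(j+1)} x_j)_{j=1}^\infty$ with the same $G$ as before, the proof of Theorem \ref{main} already gives that $T$ is an isomorphism into, $T(\ell_\infty) \subseteq \mathcal{C}_1 \cup \{0\}$, and $T(\ell_\infty)$ contains a subsequence of ${\bf x^*}$.

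The new ingredient is to check that $T(\ell_\infty)$ is a sublattice. For every $(a_j)_{j=1}^\infty \in \ell_\infty$, using that the modulus in $\ell_\infty(E)$ is computed coordinatewise and that $x_j \geq 0$,
$$|T((a_j)_{j=1}^\infty)| = (|a_{G(j+1)} x_j|)_{j=1}^\infty = (|a_{G(j+1)}|\, x_j)_{j=1}^\infty = T((|a_j|)_{j=1}^\infty) = T(|(a_j)_{j=1}^\infty|),$$
so $T$ is a lattice homomorphism. Being also an isomorphism (into), $T(\ell_\infty)$ is a closed infinite-dimensional sublattice of $\ell_\infty(E)$, which settles the claim for $\mathcal{C}_1$.

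For $\mathcal{C}_2$, the idea is to apply the same argument, but with $A$ replaced by $A' := A \cap D$, where $D$ denotes the set of disjoint sequences in $\ell_\infty(E)$. I first note that $D$ is subsequence invariant (trivially) and $\ell_\infty$-complete: if $|x_i| \wedge |x_j| = 0$ and $\alpha_i, \alpha_j$ are scalars, then $|\alpha_i x_i| \wedge |\alpha_j x_j| = |\alpha_i|\,|x_i| \wedge |\alpha_j|\,|x_j| = 0$ by the standard lattice identity $au \wedge bv = 0$ for $u \wedge v = 0$ and $a, b \geq 0$. Hence $A'$ inherits both properties from $A$. Starting from a positive mother ${\bf x^*} \in \mathcal{C}_2 \cap \ell_\infty(E)^+$, the extracted subsequence $(x_j)_{j=1}^\infty$ is still positive and disjoint. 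The same lattice identity shows that each element of $T(\ell_\infty)$ is a disjoint sequence, so $T(\ell_\infty) \subseteq \mathcal{C}_2 \cup \{0\}$.

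The main point to verify carefully is the lattice homomorphism identity $|T(a)| = T(|a|)$, whose validity rests precisely on the positivity $x_j \geq 0$ (without it, the middle equality in the displayed line above could fail); this is why almost \emph{positive} pointwise latticeability is the natural statement. Everything else is a direct transcription of the proof of Theorem \ref{main}, together with the elementary lattice computations above.
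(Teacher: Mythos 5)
Your proposal is correct and follows essentially the same route as the paper: rerun the construction of Theorem \ref{main} from a positive mother sequence, verify via $|a_{G(j+1)}x_j| = |a_{G(j+1)}|x_j$ (which uses $x_j \geq 0$) that $T$ is a Riesz homomorphism so that $T(\ell_\infty)$ is a closed sublattice, and for $\mathcal{C}_2$ use the standard fact that disjointness is preserved under scalar multiples. The paper handles $\mathcal{C}_2$ by directly checking that each $T(\mathbf{a})$ is a disjoint sequence rather than by intersecting $A$ with the set of disjoint sequences, but this is only a cosmetic difference.
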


\begin{proof} Suppose that $\mathcal{C}_1\cap \ell_\infty(E)^{+} \neq \emptyset$ and let ${\bf x} =(x_j)_{j=1}^\infty$ be a positive sequence in $A$ such that $f(x_j){\stackrel{\tau}{\not\longrightarrow}} 0.$ Considering the operator  $T\colon \ell_\infty \longrightarrow \ell_\infty(E)$ from the proof of Theorem \ref{main}, we know that $T(\ell_\infty)$ is a subspace of $\ell_\infty(E)$ isomorphic to $\ell_\infty$, contained in $\mathcal{C}_1 \cup \{0\}$ and containing a subsequence of ${\bf x}$. All that is left to prove is that $T(\ell_\infty)$ is a sublattice of $\ell_\infty(E)$. To do so, it is enough to check that $T$ is a Riesz homomorphism. For every sequence ${\bf a} =(a_j)_{j=1}^\infty \in \ell_\infty$, using that $x_j \geq 0$ for every $j$ and the definition of $T$ by means of the surjective map $G \colon \mathbb{N} - \{1\} \longrightarrow \mathbb{N}$, we get
\begin{align*}
|T({\bf a})|& = |T((a_j)_{j=1}^\infty)|=|(a_{G(k+1)}x_k)_{k=1}^\infty|=(|a_{G(k+1)}x_k|)_{k=1}^\infty=(|a_{G(k+1)}|x_k)_{k=1}^\infty\\
&=T((|a_k|)_{k=1}^\infty)=T(|(a_k)_{k=1}^\infty|) = T(|{\bf a}|).
\end{align*}
This proves that $T$ is a Riesz homomorphism and completes the proof for $C_1$.

Suppose now that $\mathcal{C}_2\cap \ell_\infty(E)^{+} \neq \emptyset$ and let ${\bf x} =(x_j)_{j=1}^\infty$ be a positive disjoint sequence in ${\cal C}_2$. From the case of ${\cal C}_1$ we know that $T(\ell_\infty)$ is a sublattice of $\ell_\infty(E)$ isomorphic to $\ell_\infty$, contained in $\mathcal{C}_1 \cup \{0\}$ and containing a subsequence of ${\bf x}$. We just have to prove that $T(\ell_\infty) \subseteq {\cal C}_2 \cup \{0\}$, that is, we have to prove that each sequence of $T(\ell_\infty)$ is disjoint. Given ${\bf a} =(a_j)_{j=1}^\infty \in \ell_\infty$, since $x_j \perp x_k$ for all $j \neq k$, we have $a_{G(j+1)}x_j \perp a_{G(k+1)}x_k$ for all $j \neq k$ by \cite[Lemma 1.9]{Ali}. Then the sequence
$T({\bf a}) = (a_{G(j+1)}x_j)_{j=1}^\infty$ is disjoint. \end{proof}

Reasoning as we did before Corollary \ref{ncor}, we get the following.

\begin{corollary}\label{nncor} Let $E, F, f, A, \tau, {\cal C}_1$ and ${\cal C}_2$ be as in Theorem {\rm \ref{ret}}. For $i = 1,2$, if there is a seminormalized sequence $\bf x$ belonging to ${\cal C}_i$, then there exists a closed infinite dimensional sublattice of $\ell_\infty(E)$ contained in ${\cal C}_i \cup \{0\}$ and containing $\bf x$.
\end{corollary}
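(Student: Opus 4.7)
The plan is to repeat the proof of Theorem~\ref{ret} nearly verbatim, using the given seminormalized sequence ${\bf x} = (x_j)_{j=1}^\infty \in \mathcal{C}_i$ directly in the definition of the operator $T$ instead of first extracting a seminormalizing subsequence from it. This is the exact analogue, in the lattice setting, of how Corollary~\ref{ncor} is drawn from the proof of Theorem~\ref{main}: since the passage to a subsequence was merely a device to produce a seminormalized sequence on which to build the isomorphism $T$, it can be skipped when the hypothesized sequence is already seminormalized, with the payoff that all of ${\bf x}$ (not merely a subsequence of it) belongs to the resulting sublattice.

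Concretely, I would define $T\colon \ell_\infty \longrightarrow \ell_\infty(E)$ by $T((a_j)_{j=1}^\infty) = (a_{G(j+1)}\,x_j)_{j=1}^\infty$, with $G$ the ``smallest prime divisor'' map from the proof of Theorem~\ref{main}. The $\ell_\infty$-completeness of $A$ yields $T(\ell_\infty)\subseteq A$. Writing $\delta\leq \|x_j\|\leq L$ for the seminormalization constants of ${\bf x}$, the two-sided norm estimate from the proof of Theorem~\ref{main} gives $\delta\,\|(a_j)\|_\infty \leq \|T((a_j))\|_\infty \leq L\,\|(a_j)\|_\infty$, so $T$ is an isomorphism into. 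The positivity of ${\bf x}$ then makes the computation $|T({\bf a})| = T(|{\bf a}|)$ from the proof of Theorem~\ref{ret} go through, so $T$ is a Riesz homomorphism and $T(\ell_\infty)$ is a closed infinite dimensional sublattice of $\ell_\infty(E)$ isomorphic to $\ell_\infty$; moreover $T((1,1,1,\dots)) = {\bf x}$, so ${\bf x}\in T(\ell_\infty)$.

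To finish, I would verify $T(\ell_\infty) \subseteq \mathcal{C}_i \cup \{0\}$. For $i=1$, the argument at the end of the proof of Theorem~\ref{main} applies unchanged: given a nonzero $\xi = T((a_j))$, pick $j_0$ with $a_{G(j_0+1)} \neq 0$, let $p$ be the smallest prime dividing $j_0+1$ and $m = G(j_0+1)$, and use the homogeneity of $f$ together with the non-$\tau$-convergence property of ${\bf x}$ to rule out any subsequence of $(f(a_{G(j+1)}\,x_j))$ being $\tau$-null. For $i=2$, the disjointness of ${\bf x}$ transfers via \cite[Lemma~1.9]{Ali} to $T({\bf a}) = (a_{G(j+1)}\,x_j)$ for every ${\bf a}\in\ell_\infty$, exactly as at the end of the proof of Theorem~\ref{ret}. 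The only point requiring a bit of care is precisely this last verification, since the analogous step in Theorem~\ref{ret} was carried out on a sequence obtained after an initial ``no-$\tau$-subsequence-converges'' cleaning; beyond that bookkeeping, every remaining ingredient---the prime structure of $G$, the homogeneity of $f$, the $\ell_\infty$-completeness of $A$, and the Riesz-homomorphism computation for positive ${\bf x}$---is already in place, so I do not foresee a real obstacle.
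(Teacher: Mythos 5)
Your proposal is faithful to the paper's own justification, which consists of the single sentence ``Reasoning as we did before Corollary \ref{ncor}''; but the step you flag as ``requiring a bit of care'' and then set aside as bookkeeping is a genuine gap, and it is exactly where the argument as you (and the authors) describe it breaks down. In the proof of Theorem \ref{main} the passage to a subsequence does two jobs, not one: it produces a seminormalized sequence (needed for $T$ to be an isomorphism into), and, before that, it produces a sequence satisfying (\ref{eqqq}), namely that \emph{no subsequence} of $(f(y_j))_{j=1}^\infty$ is $\tau$-null. The hypothesis that $\mathbf{x}$ is seminormalized replaces only the second extraction. Property (\ref{eqqq}) is indispensable in the final verification that $T(\ell_\infty)\subseteq\mathcal{C}_i\cup\{0\}$: for a nonzero $\xi=T(\mathbf{a})$ one controls $f\circ\xi$ only along the indices $j$ with $G(j+1)=m$ for one fixed $m$ with $a_m\neq 0$, so one needs $(f(x_j))_j$ to be non-$\tau$-null \emph{along that particular subsequence}, whereas membership of $\mathbf{x}$ in $\mathcal{C}_i$ only says that the full sequence is non-$\tau$-null. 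A concrete failure: take $E=\mathbb{R}^2$ with the coordinatewise order, $F=\mathbb{R}$, $f$ the first-coordinate projection, $A=\ell_\infty(E)$, $\tau$ the usual topology, and $x_j=(0,1)$ for $j$ odd, $x_j=(1,1)$ for $j$ even. Then $\mathbf{x}$ is positive, seminormalized and belongs to $\mathcal{C}_1$, but $T((1,0,0,\dots))$ equals $x_j$ at odd $j$ and $0$ at even $j$ (since $G(j+1)=1$ exactly when $j$ is odd), and $f$ applied to it is identically $0$; so this nonzero element of $T(\ell_\infty)$ is not in $\mathcal{C}_1$.

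The statement can still be proved, but the operator must be modified rather than the extraction skipped. First extract, exactly as in the proof of Theorem \ref{main}, a subsequence $(x_{j_k})_{k=1}^\infty$ of $\mathbf{x}$ such that no subsequence of $(f(x_{j_k}))_{k=1}^\infty$ is $\tau$-null and $\|x_{j_k}\|\geq\delta>0$ for every $k$. Then define $T(\mathbf{a})_j=a_{G(k+1)}x_j$ if $j=j_k$ and $T(\mathbf{a})_j=a_1x_j$ for the remaining indices $j$. This $T$ is linear, satisfies $T((1,1,1,\dots))=\mathbf{x}$, maps into $A$ by $\ell_\infty$-completeness, satisfies $\delta\|\mathbf{a}\|_\infty\leq\|T(\mathbf{a})\|_\infty\leq L\|\mathbf{a}\|_\infty$ with $L=\sup_j\|x_j\|$ (the lower bound coming from the coordinates $j_k$ and the surjectivity of $G$), is a Riesz homomorphism when $\mathbf{x}\geq 0$, and preserves disjointness; and the non-$\tau$-nullity of $f\circ T(\mathbf{a})$ for $\mathbf{a}\neq 0$ is now checked along the indices $j_k$ with $G(k+1)=m$, where the analogue of (\ref{eqqq}) is available. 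With this repair your outline (and the paper's) goes through.
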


As to applications of the results above, we start with Banach lattices not enjoying certain well studied properties.

\begin{corollary}\label{corroc} For any Banach lattice $E$, each of the sets 
$$\mathcal{C}_1=\{(x_j)_{j=1}^\infty \in E^{\mathbb{N}} \colon (x_j)_{j=1}^\infty ~\textrm{is order bounded, disjoint and}~x_j \not\longrightarrow 0 \} \mbox{ and}$$
$$\mathcal{C}_2=\{(x_j)_{j=1}^\infty \in E^{\mathbb{N}} \colon (x_j)_{j=1}^\infty ~\textrm{is norm bounded, disjoint and}~x_j \stackrel{\omega}{\not\longrightarrow} 0 \}$$
is empty or almost positive pointwise latticeable in $\ell_\infty(E)$.
\end{corollary}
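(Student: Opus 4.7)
The plan is to derive both statements as direct applications of Theorem \ref{ret}, using the identity operator $f = \mathrm{id}_E$ and selecting the set $A$ and the topology $\tau$ appropriately for each set $\mathcal{C}_i$. Since the identity is continuous and linear it is a map of homogeneous type, and we only need to match the descriptions of $\mathcal{C}_1$ and $\mathcal{C}_2$ with the set ``$\mathcal{C}_2$'' appearing in Theorem \ref{ret} (the disjoint case).

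For the first set, I would take $\tau$ to be the norm topology on $E$ (which is trivially weaker than itself, according to the convention fixed before Theorem \ref{main}), and
$$A = \{(x_j)_{j=1}^\infty \in E^{\mathbb{N}} : (x_j)_{j=1}^\infty \text{ is order bounded}\}.$$
Any order bounded sequence is norm bounded, since order intervals in a Banach lattice are norm bounded, so $A \subseteq \ell_\infty(E)$. Subsequence invariance is immediate, and $\ell_\infty$-completeness follows from the fact that if $|x_j| \leq u$ for all $j$ and $|a_j| \leq M$ for all $j$, then $|a_j x_j| \leq M u$, so $(a_j x_j)_{j=1}^\infty$ is still order bounded. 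Applying Theorem \ref{ret} to the disjoint case ($i = 2$) with this data, the set there becomes
$$\{(x_j)_{j=1}^\infty \in A : (x_j)_{j=1}^\infty \text{ is disjoint and } x_j \not\longrightarrow 0\},$$
which is exactly our $\mathcal{C}_1$, so the conclusion follows.

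For the second set, I would take $A = \ell_\infty(E)$ and $\tau$ to be the weak topology $\omega$ on $E$. Then $A$ is obviously subsequence invariant and $\ell_\infty$-complete, and $\omega$ is a vector topology weaker than the norm topology. Again applying Theorem \ref{ret} in the disjoint case $i=2$ with $f = \mathrm{id}_E$, the resulting set is
$$\{(x_j)_{j=1}^\infty \in \ell_\infty(E) : (x_j)_{j=1}^\infty \text{ is disjoint and } x_j \stackrel{\omega}{\not\longrightarrow} 0\},$$
which is exactly our $\mathcal{C}_2$.

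There is no genuine obstacle here; the argument is simply the correct packaging of the ingredients. The only minor point deserving attention is the identification of the ``not norm null'' and ``not weakly null'' conditions with the $\tau$-null negation in Theorem \ref{ret}, which is why $\tau$ is chosen to be, respectively, the norm topology and the weak topology, and the quick verification that order bounded sequences form an $\ell_\infty$-complete subset of $\ell_\infty(E)$.
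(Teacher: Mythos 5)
There is a genuine gap. Theorem \ref{ret} does not conclude ``$\mathcal{C}_i$ is empty or almost positive pointwise latticeable''; its dichotomy is ``$\mathcal{C}_i\cap \ell_\infty(E)^{+}=\emptyset$ or $\mathcal{C}_i$ is almost positive pointwise latticeable''. The corollary you are proving makes the stronger claim that mere nonemptiness of $\mathcal{C}_i$ suffices, and since the very definition of almost positive pointwise latticeability presupposes that the set contains a positive sequence, you cannot get the corollary from the theorem without first showing that $\mathcal{C}_i\neq\emptyset$ implies $\mathcal{C}_i\cap \ell_\infty(E)^{+}\neq\emptyset$. Your write-up skips this entirely and simply ``applies the theorem'', which only yields the weaker dichotomy.

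The missing step is carried out in the paper by passing to absolute values: given $(x_j)_{j=1}^\infty\in\mathcal{C}_i$, one checks that $(|x_j|)_{j=1}^\infty$ is again disjoint, is order bounded (if $a\leq x_j\leq b$ then $0\leq |x_j|\leq b\vee(-a)$) or norm bounded, satisfies $|x_j|\not\longrightarrow 0$ when $x_j\not\longrightarrow 0$, and satisfies $|x_j|\stackrel{\omega}{\not\longrightarrow}0$ when $x_j\stackrel{\omega}{\not\longrightarrow}0$ (the contrapositive follows from $|x^*(x_j)|\leq |x^*|(|x_j|)$). This produces the required positive element of $\mathcal{C}_i$. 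The rest of your argument --- taking $f$ to be the identity, $\tau$ the norm (resp.\ weak) topology, and verifying that the order bounded sequences form a subsequence invariant, $\ell_\infty$-complete subset of $\ell_\infty(E)$ --- matches the paper's proof and is fine; only the positivity step needs to be added.
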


\begin{proof} It is obvious that ${\cal C}_1 \cup {\cal C}_2 \subseteq \ell_\infty(E)$. Let us see that ${\cal C}_i \neq \emptyset \Longrightarrow {\cal C}_i \cap \ell_\infty(E)^+ \neq \emptyset$. Indeed, given $(x_j)_{j=1}^\infty \in {\cal C}_i$, it is obvious that the sequence $(|x_j|)_{j=1}^\infty$ is disjoint and order bounded (if $a \leq x_j \leq b$ for every $j$, then $0 \leq |x_j| \leq b \vee (-a)$ for every $n$). On the one hand, it is also obvious that, if $x_j \not\longrightarrow 0$, then $|x_j| \not\longrightarrow 0$. On the other hand, it is not difficult to check that, if $x_j \stackrel{\omega}{\not\longrightarrow} 0$, then $|x_j| \stackrel{\omega}{\not\longrightarrow} 0$. Therefore, $(|x_j|)_{j=1}^\infty \in {\cal C}_i\cap \ell_\infty(E)^+ $.

From Theorem \ref{ret}, the case of ${\cal C}_2$ follows immediately; and for ${\cal C}_1$ it suffices to show that the set of order bounded $E$-valued sequences is subsequence invariant and $\ell_\infty$-complete. Subsequence invariance is straighforward. Let $(x_j)_{j=1}^\infty$ be an order bounded $E$-valued sequence, say, $z \leq x_j \leq w$ for every $j$, and let $(a_j)_{j=1}^\infty \in \ell_\infty$ be given, say, $c_1 \leq a_j \leq c_2$ for every $j$. If $a_j \geq 0$, then $c_1z \leq a_j x_j \leq c_2 w$. If $a_j < 0$, then $-c_2 w \leq a_j x_j \leq -c_1z$. Hence,
 $$c_1a \wedge (-c_2b) \leq a_j x_j \leq (-c_1a) \vee c_2 b  $$
for every $j$, proving that the set of order bounded sequences is $\ell_\infty$-complete.
\end{proof}

\begin{examples}\rm (1) For every Banach lattice $E$ not having order continuous norm, the set ${\cal C}_1$ of the corollary above is nonempty by \cite[Theorem 4.14]{Ali1}, thus the set of non-norm null, order bounded and disjoint $E$-valued sequences is almost positive pointwise latticeable in $\ell_\infty(E)$.\\
(2) For every Banach lattice $E$ such that $E^*$ does not have order continuous norm, the set ${\cal C}_2$ of the corollary above is nonempty by \cite[Theorem 4.69]{Ali1}, thus the set of non-weakly null, norm bounded and disjoint $E$-valued sequences is almost positive pointwise latticeable in $\ell_\infty(E)$.
\end{examples}

On the one hand, it is known that order bounded disjoint sequences in Banach lattices are weakly null \cite[p.\! 192]{Ali1}; on the other hand, the following property was introduced in \cite{Omid} in order to study reflexive Banach lattices:

\noindent$\bullet$ A Banach lattice has the {\it disjoint Grothendieck property} if norm bounded disjoint sequences in its dual are weakly null.

\begin{corollary} Let $E$ be a Banach lattice failing the disjoint Grothendieck property. Then the set of norm bounded disjoint non-weakly null $E^*$-valued sequences is almost positive pointwise latticeable in $\ell_\infty(E^*)$.
\end{corollary}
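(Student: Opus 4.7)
The plan is to apply Theorem \ref{ret} with the Banach lattice (playing the role of $E$ in that theorem) taken to be $E^*$, the target Banach space also taken to be $E^*$, the map $f$ the identity operator on $E^*$ (a map of homogeneous type with $n=1$), the ambient set $A = \ell_\infty(E^*)$, and the vector topology $\tau$ the weak topology $\omega$ on $E^*$. With these choices, $A$ is trivially subsequence invariant and $\ell_\infty$-complete, and the set $\mathcal{C}_2$ described in Theorem \ref{ret} coincides precisely with the set whose almost positive pointwise latticeability we want to establish.

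The only nontrivial verification is that $\mathcal{C}_2 \cap \ell_\infty(E^*)^+ \neq \emptyset$. Since $E$ fails the disjoint Grothendieck property, there exists a norm bounded disjoint sequence $(x_j^*)_{j=1}^\infty$ in $E^*$ that is not weakly null. I would pass to $(|x_j^*|)_{j=1}^\infty$: it is obviously positive, it lies in $\ell_\infty(E^*)$ because $\||x_j^*|\| = \|x_j^*\|$, and it remains disjoint because $|x_j^*| \wedge |x_k^*| = 0$ already holds for $j\neq k$ by the original disjointness.

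The key step is showing that $(|x_j^*|)_{j=1}^\infty$ is not weakly null in $E^*$. Arguing by contrapositive, suppose $|x_j^*| \stackrel{\omega}{\longrightarrow} 0$. Then $\phi(|x_j^*|) \longrightarrow 0$ for every positive $\phi \in (E^*)^*_+ = (E^{**})^+$. For such positive $\phi$, decomposing $x_j^* = (x_j^*)^+ - (x_j^*)^-$ and $|x_j^*|=(x_j^*)^+ + (x_j^*)^-$, we get
$$|\phi(x_j^*)| \leq \phi((x_j^*)^+) + \phi((x_j^*)^-) = \phi(|x_j^*|) \longrightarrow 0.$$
Since every element of the Banach lattice $E^{**}$ is a difference of two positive elements, it follows that $\psi(x_j^*) \longrightarrow 0$ for every $\psi \in E^{**}$, i.e., $x_j^* \stackrel{\omega}{\longrightarrow} 0$, contradicting the choice of $(x_j^*)_{j=1}^\infty$. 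Hence $(|x_j^*|)_{j=1}^\infty \in \mathcal{C}_2\cap \ell_\infty(E^*)^+$, and Theorem \ref{ret} delivers the conclusion.

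There is no real obstacle here: the argument is a direct application of Theorem \ref{ret}, parallel to the passage from $(x_j)$ to $(|x_j|)$ used in Corollary \ref{corroc}. The only subtlety worth isolating is the implication ``$|x_j^*| \stackrel{\omega}{\to} 0 \Rightarrow x_j^* \stackrel{\omega}{\to} 0$'', which is a standard consequence of the fact that positive functionals dominate arbitrary ones via the Riesz decomposition in the dual lattice.
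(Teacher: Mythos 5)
Your proposal is correct and follows essentially the same route as the paper: the paper also passes to $(|x_j^*|)_{j=1}^\infty$ and notes that disjointness, norm boundedness and non-weak-nullity are preserved, then invokes its Corollary on norm bounded disjoint non-weakly null sequences (itself an instance of Theorem \ref{ret} with exactly your choices of $A$, $f$ and $\tau$). Your explicit verification that $|x_j^*| \stackrel{\omega}{\to} 0$ forces $x_j^* \stackrel{\omega}{\to} 0$ via positive functionals and the Riesz decomposition in $E^{**}$ is precisely the step the paper leaves as ``not difficult to check.''
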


\begin{proof} The failure of the disjoint Grothendieck property gives a norm bounded disjoint non-weakly null sequence $(x_j^*)_{j=1}^\infty$ in $E^*$. The same reasoning of the proof of Corollary \ref{corroc} shows that $(|x_j^*|)_{j=1}^\infty$ is a positive norm bounded disjoint non-weakly null sequence in $E^*$. The result follows from Corollary \ref{corroc}.
\end{proof}

\begin{examples}\rm $\ell_\infty$ and Banach lattices containing a copy of $\ell_\infty$ fail the disjoint Grothendieck property (see \cite[Theorem 4.56]{Ali1} and \cite[p.\,4]{Omid}).
\end{examples}

Next we state the counterpart of Proposition \ref{prop4} to Banach lattices. The argument is essentially the same, with Theorem \ref{ret} playing the role of Theorem \ref{main}, so we skip the proof.

\begin{proposition}\label{ret2} Let $E$ be a Banach lattice, let $F$ be a Banach space, let $f \colon E \longrightarrow F$ be a map of homogeneous type, and let $\tau_E, \tau_F$ be vector topologies in $E$ and $F$, respectively, with $\tau_F$ weaker than the norm topology. Consider the sets
    $$ {\cal C}_1=
    \{(x_j)_{j=1}^\infty \in \ell_\infty(E) \colon x_j \stackrel{\tau_E}{\longrightarrow 0} \textrm{and } f(x_j)\stackrel{\tau_F}{\not \longrightarrow 0}\} \mbox{ and}$$
$$ {\cal C}_2=
    \{(x_j)_{j=1}^\infty \in \ell_\infty(E) : (x_j)_{j=1}^\infty \mbox{ is disjoint}, x_j \stackrel{\tau_E}{\longrightarrow 0} \textrm{and } f(x_j)\stackrel{\tau_F}{\not \longrightarrow 0}\}.$$
For $i = 1,2$, if ${\cal C}_i$ contains a positive sequence, then it is almost positive pointwise latticeable. 
\end{proposition}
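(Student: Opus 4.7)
The plan is to reduce Proposition \ref{ret2} to Theorem \ref{ret} by a judicious choice of the auxiliary set $A$, in complete parallel with the way Proposition \ref{prop4} was reduced to Theorem \ref{main}. First I would set
$$ A := \{(x_j)_{j=1}^\infty \in \ell_\infty(E) : x_j \stackrel{\tau_E}{\longrightarrow} 0\} $$
and observe that, with the choice $\tau := \tau_F$, the sets $\mathcal{C}_1$ and $\mathcal{C}_2$ described in the statement of Proposition \ref{ret2} coincide literally with the sets bearing the same names in Theorem \ref{ret}. So the proposition will follow as soon as I show that this $A$ fits the hypotheses of Theorem \ref{ret}, i.e., that it is subsequence invariant and $\ell_\infty$-complete.

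Subsequence invariance of $A$ is immediate, since any subsequence of a $\tau_E$-null sequence is again $\tau_E$-null. For $\ell_\infty$-completeness I would just reproduce the argument already carried out in the proof of Proposition \ref{prop4}(a): given $(a_j)_{j=1}^\infty \in \ell_\infty$ and $(x_j)_{j=1}^\infty \in A$, put $c := \|(a_k)_{k=1}^\infty\|_\infty$; the case $c = 0$ is trivial, and when $c > 0$ one uses the convergence $c\, x_j \stackrel{\tau_E}{\longrightarrow} 0$, the existence of a balanced neighborhood basis of $0$ in $(E, \tau_E)$ (cf.\ \cite[Proposition 8.1.7]{livro}) and the inequality $|a_j|/c \leq 1$ to conclude, via $a_j x_j = (a_j/c)\cdot c\, x_j$, that $a_j x_j \stackrel{\tau_E}{\longrightarrow} 0$, hence $(a_j x_j)_{j=1}^\infty \in A$.

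With these two properties of $A$ in hand, Theorem \ref{ret} applied to $f$, this $A$, and $\tau := \tau_F$ delivers both conclusions of Proposition \ref{ret2} simultaneously: for $i = 1,2$, if $\mathcal{C}_i$ contains a positive sequence, equivalently $\mathcal{C}_i \cap \ell_\infty(E)^+ \neq \emptyset$, then $\mathcal{C}_i$ is almost positive pointwise latticeable in $\ell_\infty(E)$. I do not anticipate a real obstacle here: the disjointness clause that distinguishes $\mathcal{C}_2$ from $\mathcal{C}_1$ need not be built into $A$, because it is already handled inside Theorem \ref{ret} itself (the operator $T$ constructed there preserves disjointness via \cite[Lemma 1.9]{Ali}). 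The only routine check is the $\ell_\infty$-completeness of $A$, which, as noted, is a verbatim repetition of the reasoning from Proposition \ref{prop4}(a).
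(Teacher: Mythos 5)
Your proposal is correct and is exactly the argument the paper intends: it explicitly skips the proof, stating that it is the same as that of Proposition \ref{prop4} with Theorem \ref{ret} in place of Theorem \ref{main}, which is precisely your reduction via the set $A$ of bounded $\tau_E$-null sequences. Your observation that the disjointness clause in ${\cal C}_2$ is already absorbed by Theorem \ref{ret} rather than by $A$ is also the right way to see it.
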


The following property, introduced by Wnuk \cite{Wnuk1, Wnuk2} and R\"{a}biger \cite{Rabiger}, is a quite popular topic in Banach lattice theory; in particular it is the subject of a number of recent papers, see, e.g., \cite{ardvin, ardval, bumonat, GJV, chen2023} and references therein.

\noindent $\bullet$ A Banach lattice has the {\it positive Schur property} if positive (or disjoint or positive disjoint) weakly null sequences are norm null.

\medskip

For a Banach lattice $E$ failing the positive Schur property, the existence of a closed infinite dimensional sublattice of $\ell_\infty(E)$ consisting, up to $0$, of disjoint non-norm null weakly null $E$-valued sequences, was established in \cite[Theorem 2.1(b)]{JLPL}. In order to improve this result, let us recall the definition of the absolute weak topology:

Given a Banach lattice $E$ and a functional $x^* \in E^*$, consider the seminorm $x \in E \mapsto p_{x^*}(x) := |x^*|(|x|)$. The {\it absolute weak topology} is the locally convex-solid topology $|\sigma|(E,E^*)$ on $E$ generated by the family of seminorms $\{p_{x^*} : x^* \in E^*\}$, see \cite[p.\,172]{Ali1}. As a locally convex topology, the absolute weak topology is a vector topology, and it is clear that it lies between the weak and norm topologies. A sequence that converges to $0$ with respect to this topology shall be called an {\it absolutely weakly null} sequence, in symbols, $x_j{\stackrel{|\sigma|}{\longrightarrow}} 0.$

\begin{corollary} If the Banach lattice $E$ fails the positive Schur property, then the sets $$\mathcal{C}_1=\{(x_j)_{j=1}^\infty \in \ell_\infty(E) \colon x_j{\stackrel{|\sigma|}{\longrightarrow}} 0 ~\textrm{and}~ x_j \not\longrightarrow 0\}~ \mbox{and}$$
$$\mathcal{C}_2=\{(x_j)_{j=1}^\infty \in \ell_\infty(E) \colon (x_j)_{j=1}^\infty ~\textrm{is disjoint,}~ x_j{\stackrel{|\sigma|}{\longrightarrow} 0 ~\textrm{and}~ x_j \not\longrightarrow} 0\}$$
are almost positive pointwise latticeable.
\end{corollary}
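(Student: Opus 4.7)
The plan is to apply Proposition \ref{ret2} directly, with $F=E$, $f=\mathrm{id}_E$ (a map of homogeneous type with $n=1$), $\tau_E = |\sigma|(E,E^*)$, and $\tau_F$ taken to be the norm topology on $E$ itself. The norm topology is trivially weaker than itself under the convention $\tau\subseteq\tau'$ of the paper, so all hypotheses of Proposition \ref{ret2} are satisfied, and with these choices the sets ${\cal C}_1$ and ${\cal C}_2$ appearing in Proposition \ref{ret2} coincide on the nose with the $\mathcal{C}_1$ and $\mathcal{C}_2$ of the present corollary. Thus the proof reduces to producing a positive sequence in each of $\mathcal{C}_1$ and $\mathcal{C}_2$.

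Since $\mathcal{C}_2 \subseteq \mathcal{C}_1$, it suffices to handle $\mathcal{C}_2$. I would use the equivalent formulation of the positive Schur property stated just before this corollary: its failure gives a positive disjoint weakly null sequence $(x_j)_{j=1}^\infty$ in $E$ with $x_j\not\longrightarrow 0$ in norm. Being weakly null, this sequence is norm bounded, hence $(x_j)_{j=1}^\infty \in \ell_\infty(E)$. To upgrade weak-nullity to absolute weak-nullity, I would fix $x^*\in E^*$ and use that $|x^*|\in E^*$ (because $E^*$ is a Banach lattice) together with $x_j\geq 0$ to write
$$p_{x^*}(x_j) = |x^*|(|x_j|) = |x^*|(x_j) \longrightarrow 0,$$
so $x_j \stackrel{|\sigma|}{\longrightarrow} 0$. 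Consequently $(x_j)_{j=1}^\infty \in \mathcal{C}_2 \cap \ell_\infty(E)^+ \subseteq \mathcal{C}_1 \cap \ell_\infty(E)^+$, and Proposition \ref{ret2} delivers the almost positive pointwise latticeability of both sets simultaneously.

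There is essentially no obstacle here: the conceptual work is already encapsulated in Proposition \ref{ret2}, and the only two observations required are the identification of the correct $\tau_E, \tau_F$, and the one-line passage from weakly null to absolutely weakly null for positive sequences. The mildest subtlety is making sure that the formulation of the positive Schur property invoked (positive disjoint weakly null sequences) matches the one stated in the paper, but this is guaranteed by the definition given in the bullet preceding the corollary.
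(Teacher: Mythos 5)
Your proposal is correct and follows essentially the same route as the paper: both obtain a positive disjoint weakly null non-norm null sequence from the failure of the positive Schur property, upgrade it to absolutely weakly null using positivity, and then invoke Proposition \ref{ret2}. The only difference is that you spell out the one-line verification that positive weakly null sequences are absolutely weakly null, which the paper states without proof.
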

\begin{proof} As $E$ fails the positive Schur property, there exists a positive disjoint weakly null non-norm null sequence $(x_j)_{j=1}^\infty$ in $E$. Since a positive sequence is absolutely weakly null if and only if it is weakly null, $(x_j)_{j=1}^\infty$ is absolutely weakly null. Hence, $\mathcal{C}_1 \cap \ell_\infty (E)^+ \neq \emptyset \neq \mathcal{C}_2 \cap \ell_\infty (E)^+ $. 
Proposition \ref{ret2} yields the result. 
\end{proof}

\begin{examples}\rm The following are examples of Banach lattices failing the positive Schur property: (i) Reflexive infinite dimensional spaces, in particular, for $1 < p < \infty$, $\ell_p(\Gamma)$ for $\Gamma$ infinite and infinite dimensional $L_p(\mu)$-spaces; (ii) Infinite dimensional $AM$-spaces, in particular, $c_0(\Gamma), \ell_\infty(\Gamma)$ for $\Gamma$ infinite, and $C(K)$ for any infinite compact Hausdorff space; (iii) Infinite dimensional Banach lattices not containing a lattice copy of $\ell_1$.
\end{examples}

A Banach lattice $E$ has the:\\
$\bullet$ {\it Dual positive Schur property} if positive disjoint weak$^*$ null sequences in $E^*$ are norm null.\\
$\bullet$ {\it Positive Grothendieck property} if positive weak$^*$ null sequences in $E^*$ are weakly null.

These two properties were introduced by Wnuk \cite{Wnuk}, where he remarked that a Banach lattice $E$ has the dual positive Schur property if and only if $E$ has the positive Grothendieck property and $E^*$ has the positive Schur property. Recent developments can be found in \cite{GJV, bu, Galindo}.

The absolute weak$^*$ topology on the dual $E^*$ of a Banach lattice $E$ is the locally convex-solid topology on $E^*$ generated by the family of seminorms  $\{q_x: x \in E\}$, where $q_x(x^*) = |x^*|(|x|)$  for $x^* \in E^*$ and $x \in E$ \cite[p.\,176]{Ali1}. This topology lies between the weak$^*$ and the weak topologies on $E^*$.

\begin{corollary} Let $E$ be a Banach lattice.\\
{\rm (a)} If $E$ fails the positive Grothendieck property, then the set of weak$^*$ null non weakly null $E^*$-valued sequences is almost positive pointwise latticeable in $\ell_\infty(E^*)$. \\
{\rm (b)} If $E$ fails the dual positive Schur property, then the set of weak$^*$ null (or absolutely weak$^*$-null) disjoint non-norm null $E^*$-valued sequences is almost positive pointwise latticeable in $\ell_\infty(E^*)$.
\end{corollary}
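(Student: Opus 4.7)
The plan is to invoke Proposition \ref{ret2} with $f$ taken to be the identity operator on $E^*$ (continuous linear, hence of homogeneous type with $n=1$), regarding $E^*$ as the Banach lattice playing the role of $E$ in that proposition. Throughout we use that weak$^*$ convergent sequences in $E^*$ are norm-bounded by the uniform boundedness principle, so positive weak$^*$ null sequences automatically live in $\ell_\infty(E^*)^+$.

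For (a), set $\tau_E := \omega^*$ and $\tau_F := \omega$ on $E^*$. Both are vector topologies, and $\omega$ is weaker than the norm topology. The set $\mathcal{C}_1$ of Proposition \ref{ret2} is then exactly the set of weak$^*$ null, non-weakly null sequences in $\ell_\infty(E^*)$. The failure of the positive Grothendieck property of $E$ yields a positive weak$^*$ null sequence in $E^*$ that is not weakly null; this is the required positive element of $\mathcal{C}_1$, and Proposition \ref{ret2} delivers the conclusion.

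For (b), take $\tau_F$ to be the norm topology on $E^*$ (trivially weaker than itself). The failure of the dual positive Schur property of $E$ provides a positive disjoint weak$^*$ null non-norm null sequence $(x_j^*)_{j=1}^\infty$ in $E^*$. To handle both formulations at once, observe that for a positive functional $x_j^* \geq 0$ one has $|x_j^*| = x_j^*$, and therefore for every $x \in E$,
$$ q_x(x_j^*) = |x_j^*|(|x|) = x_j^*(|x|) \longrightarrow 0, $$
the last convergence being weak$^*$ nullity applied to $|x| \in E$. Hence $(x_j^*)_{j=1}^\infty$ is absolutely weak$^*$ null as well. Choosing either $\tau_E := \omega^*$ or $\tau_E := |\sigma|(E^*,E)$, both of which are vector topologies on $E^*$, we see that $(x_j^*)_{j=1}^\infty$ is a positive member of the corresponding set $\mathcal{C}_2$ in Proposition \ref{ret2}, and the conclusion follows.

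The main point requiring explicit attention is the equivalence, for positive sequences, between weak$^*$ nullity and absolute weak$^*$ nullity; this is what allows the single positive witness furnished by the failure of the dual positive Schur property to serve both versions of statement (b) simultaneously.
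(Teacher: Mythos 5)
Your proposal is correct and follows essentially the same route as the paper: both apply Proposition \ref{ret2} with the identity on $E^*$, obtain the positive witness directly from the definition of the failed property, and handle the absolute weak$^*$ version of (b) via the observation that positive weak$^*$-null sequences are absolutely weak$^*$-null. Your write-up merely spells out these steps in more detail than the paper's two-line proof.
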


\begin{proof} Each item follows from the definition of the corresponding property and Proposition \ref{ret2}. For the part of the absolute weak$^*$-null topology on (b), use that positive weak$^*$-null sequences are absolutely weak$^*$-null.
\end{proof}

\begin{examples}\rm Infinite dimensional $AL$-spaces fail the positive Grothendieck property \cite[p.\,764]{Wnuk}. Reasoning with the canonical unit vectors, it is easy to see that $c_0$ fails the dual positive Schur property. More generally, if there exists a positive non-compact operator from a Banach lattice $E$ to $c_0$, then $E$ fails the dual positive Schur property \cite[Proposition 2.7]{Wnuk}.
\end{examples}

We shall finish the paper by showing that the results of this section can be applied to operators not belonging to several already studied classes. We do not intend to be exhaustive, we just want to illustrate that the results apply to classical classes of operators, to classes that have been explored by many experts and to very recently introduced classes. References to each of these classes shall be given, so examples of operators not belonging to each of these classes can be found in the corresponding references.

An operator $T \colon E \longrightarrow F$ from a Banach lattice $E$ to a Banach space $F$ is:

\noindent$\bullet$ {\it Order weakly compact} if $T([0,x])$ is relatively weakly compact in $F$ for every $x \in E^+$
. This class, introduced by Dodds in 1975, is a classic topic in Banach lattice theory, see \cite[Chapter 5]{Ali1} and \cite[Section 3.4]{Meyer}.

\noindent$\bullet$ {\it $M$-weakly compact} if $T$ maps norm bounded disjoint sequences in $E$ to norm null sequences in $F$. This class, introduced by Meyer-Nieberg in 1974, is also a classic topic in Banach lattice theory, see \cite[Chapter 5]{Ali1} and \cite[Section 3.6]{Meyer}.

\noindent$\bullet$ {\it Almost Dunford-Pettis} if $T$ maps disjoint weakly null sequences in $E$ to norm null sequences in $F$. This class was introduced by S\'anchez \cite{sanchez}, significant developments appeared in \cite{aqzzouz}, for recent contributions,  see \cite{ardvin, ardval, bumonat, khabaoui, Moussa}.

\noindent$\bullet$ {\it Weak $M$ weakly compact} if $T$ maps disjoint norm bounded sequences in $E$ to weakly null sequences in $F$. This class was recently introduced in \cite{Moussa} and a development appeared in \cite{Alpay}.

\begin{corollary} Let $T \colon E \longrightarrow F$ be an operator from a Banach lattice $E$ to a Banach space $F$.\\
{\rm (a)} If $T$ fails to be order weakly compact, then the set of order bounded weakly null sequences $(x_j)_{j=1}^\infty$ in $E$ such that $T(x_j) \not\longrightarrow 0$ in $F$ is almost positive pointwise latticeable in $\ell_\infty(E)$. \\
{\rm (b)} If $T$ fails to be almost Dunford-Pettis, then the set of disjoint weakly null sequences $(x_j)_{j=1}^\infty$ in $E$ such that $T(x_j) \not\longrightarrow 0$ in $F$ is almost positive pointwise latticeable in $\ell_\infty(E)$.
\end{corollary}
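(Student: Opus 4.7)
The plan is to apply Theorem \ref{ret} to item (a) and Proposition \ref{ret2}(b) to item (b); in each case, the crucial preliminary step is to produce a positive sequence witnessing the failure of the property.

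For item (a), I will invoke the classical disjoint-sequence characterization of order weakly compact operators (see, e.g., \cite[Theorem 5.57]{Ali1} or \cite[Section 3.4]{Meyer}): $T$ is order weakly compact if and only if $\|T(x_j)\| \to 0$ for every order bounded disjoint sequence $(x_j)_{j=1}^\infty$ in $E^+$. The failure of $T$ to be order weakly compact thus produces a positive order bounded disjoint sequence $(x_j)_{j=1}^\infty$ in $E$ with $T(x_j) \not\longrightarrow 0$; because order bounded disjoint sequences in a Banach lattice are automatically weakly null, this $(x_j)_{j=1}^\infty$ lies in the target set and is positive. I will then set
$$A = \{(y_j)_{j=1}^\infty \in \ell_\infty(E) : (y_j)_{j=1}^\infty \text{ is order bounded and weakly null}\},$$
which is subsequence invariant, and verify its $\ell_\infty$-completeness by combining the order-boundedness argument from the proof of Corollary \ref{corroc} with the weak-nullity argument from the proof of Proposition \ref{prop4}(a). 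Applying Theorem \ref{ret} with this $A$, $f = T$, and $\tau$ the norm topology on $F$ (which, recall, is trivially weaker than itself) then gives the conclusion.

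For item (b), I will use the analogous characterization for almost Dunford-Pettis operators due to S\'anchez \cite{sanchez} (see also \cite{aqzzouz}): $T$ is almost Dunford-Pettis if and only if $\|T(x_j)\| \to 0$ for every positive disjoint weakly null sequence $(x_j)_{j=1}^\infty$ in $E$. The failure of this property supplies a positive sequence in the target set, after which a direct application of Proposition \ref{ret2}(b) with $\tau_E = \omega$, $\tau_F$ the norm topology on $F$, and $f = T$ delivers the result.

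The main obstacle is ensuring that each failure yields a \emph{positive} disjoint sequence as witness; this is precisely the content of the two classical characterizations invoked above, after which everything else follows formally from the machinery of Section 4, with the subsequence invariance and $\ell_\infty$-completeness checks being routine adaptations of arguments already carried out in Corollary \ref{corroc} and Proposition \ref{prop4}.
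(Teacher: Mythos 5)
Your proposal is correct and follows essentially the same route as the paper: produce a positive witness sequence from a known characterization of the failing class (a disjoint-sequence criterion from \cite{Ali1} for order weak compactness, and \cite[Theorem 2.2]{aqzzouz} for almost Dunford--Pettis), then apply Theorem \ref{ret} for (a) and Proposition \ref{ret2} for (b) after the routine subsequence-invariance and $\ell_\infty$-completeness checks. If anything, your item (a) is slightly more careful than the paper's, which only records that the set of order bounded sequences is $\ell_\infty$-complete, whereas you correctly take $A$ to be the order bounded \emph{weakly null} sequences and combine the order-boundedness and weak-nullity arguments.
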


\begin{proof} (a) As $T$ is not order weakly compact, by \cite[Ex.\,3, p.\,336]{Ali1} there exists a positive order bounded weakly null sequence $(x_j)_{j=1}^\infty$ in $E$ such that $T(x_j) \not\longrightarrow 0$. As the set of order bounded sequences is subsequence invariant and $\ell_\infty$-complete (see the proof of Corollary \ref{corroc}), the result follows from Theorem \ref{ret}. \\
(b) As $T$ is not almost Dunford-Pettis, by \cite[Theorem 2.2]{aqzzouz} there exists a positive disjoint weakly null sequence $(x_j)_{j=1}^\infty$ in $E$ such that $T(x_j) \not\longrightarrow 0$. The result follows from Proposition \ref{ret2}.
\end{proof}

\begin{corollary} Let $T \colon E \longrightarrow F$ be a positive operator between Banach lattices.\\
{\rm (a)} If $T$ fails to be $M$-weakly compact, then the set of norm bounded disjoint sequences $(x_j)_{j=1}^\infty$ in $E$ such that $T(x_j) \not\longrightarrow 0$ in $F$ is almost positive pointwise latticeable in $\ell_\infty(E)$. \\
{\rm (b)} If $T$ fails to be weak $M$ weakly compact, then the set of disjoint bounded $(x_j)_{j=1}^\infty$ in $E$ such that $T(x_j) \stackrel{\omega}{\not\longrightarrow} 0$ in $F$ is almost positive pointwise latticeable in $\ell_\infty(E)$.
\end{corollary}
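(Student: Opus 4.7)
The plan is to deduce both parts from Theorem \ref{ret} applied with $f = T$, the trivially subsequence invariant and $\ell_\infty$-complete set $A = \ell_\infty(E)$, and $\tau$ equal to the norm topology in part (a) and the weak topology on $F$ in part (b). In each case the resulting $\mathcal{C}_2$ of Theorem \ref{ret} coincides verbatim with the set appearing in the corollary, so all that has to be produced is one positive sequence lying in $\mathcal{C}_2 \cap \ell_\infty(E)^+$.

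For (a), the failure of $M$-weak compactness of $T$ furnishes a norm bounded disjoint sequence $(x_j)_{j=1}^\infty$ in $E$ with $T(x_j) \not\longrightarrow 0$. I would replace it by $(|x_j|)_{j=1}^\infty$, which is positive, norm bounded (same norms), and disjoint since $|x_j| \wedge |x_k| = 0$ whenever $x_j \perp x_k$. Positivity of $T$ yields the classical modulus bound $|T(x_j)| \leq T(|x_j|)$, so $\|T(|x_j|)\| \geq \|T(x_j)\| \not\longrightarrow 0$. Thus $(|x_j|)_{j=1}^\infty \in \mathcal{C}_2 \cap \ell_\infty(E)^+$ and Theorem \ref{ret} concludes.

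For (b), the failure of weak $M$-weak compactness supplies a norm bounded disjoint sequence $(x_j)_{j=1}^\infty$ in $E$ with $T(x_j) \stackrel{\omega}{\not\longrightarrow} 0$. Again I would pass to $(|x_j|)_{j=1}^\infty$, which is positive, norm bounded and disjoint. The key step is to check $T(|x_j|) \stackrel{\omega}{\not\longrightarrow} 0$. I would argue contrapositively: if $T(|x_j|) \stackrel{\omega}{\longrightarrow} 0$, then from $0 \leq |T(x_j)| \leq T(|x_j|)$ and testing against an arbitrary positive $y^* \in (F^*)^+$ we get $y^*(|T(x_j)|) \longrightarrow 0$; writing a general $y^* \in F^*$ as $y^{*+} - y^{*-}$ then gives $|T(x_j)| \stackrel{\omega}{\longrightarrow} 0$. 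Applying the same squeeze to $0 \leq T(x_j)^\pm \leq |T(x_j)|$ produces $T(x_j)^\pm \stackrel{\omega}{\longrightarrow} 0$, hence $T(x_j) \stackrel{\omega}{\longrightarrow} 0$, a contradiction. So $(|x_j|)_{j=1}^\infty$ is the required positive witness for $\mathcal{C}_2$ with $\tau = \omega$, and Theorem \ref{ret} again finishes the job.

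The only mildly delicate point is the weak-topology transfer in (b). Its engine is that positivity of $T$ (rather than being a Riesz homomorphism) still ensures $|T(x)| \leq T(|x|)$, combined with the well-known sandwich principle that $0 \leq u_j \leq v_j$ and $v_j \stackrel{\omega}{\longrightarrow} 0$ imply $u_j \stackrel{\omega}{\longrightarrow} 0$ in any Banach lattice, proved by evaluating on positive functionals and decomposing arbitrary ones. Everything else is routine bookkeeping against Theorem \ref{ret}.
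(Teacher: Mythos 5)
Your proposal is correct and follows essentially the same route as the paper: pass to the moduli $(|x_j|)_{j=1}^\infty$, use positivity of $T$ via $|T(x_j)|\leq T(|x_j|)$ to transfer non-convergence, and invoke the general latticeability result (the paper cites Proposition~\ref{ret2}, you apply Theorem~\ref{ret} with $A=\ell_\infty(E)$, which is an equivalent and arguably cleaner invocation here since no $\tau_E$-null condition appears). Your explicit sandwich argument for the weak-topology step in (b) is exactly the ``easy consequence of the positivity of $T$'' that the paper leaves to the reader.
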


\begin{proof} (a) Since $T$ is not $M$-weakly compact, there is a norm bounded disjoint sequence $(x_j)_{j=1}^\infty$ in $E$ such that $T(x_j) \not\longrightarrow 0$. It is clear that $(|x_j|)_{j=1}^\infty$ is a positive norm bounded disjoint sequence in $E$. Using that $T$ is positive and $T(|x_j|) \not\longrightarrow 0$, the inequality $|T(x_j)| \leq T(|x_j|)$, which implies $\|T(x_j)\| \leq \| T(|x_j|)\|$, gives $T(|x_j|) \not\longrightarrow 0$. The result follows from Proposition \ref{ret2}.\\
(b) The proof is almost the same as in (a), the only difference is that we have to use that $T(x_j) \stackrel{\omega}{\not\longrightarrow} 0$ implies $T(|x_j|) \stackrel{\omega}{\not\longrightarrow} 0$; but this is also an easy consequence of  the positivity of $T$.
\end{proof}

\bigskip
\noindent Mikaela Aires~~~~~~~~~~~~~~~~~~~~~~~~~~~~~~~~~~~~~~~~~~~~~~Geraldo Botelho~\\
Instituto de Matem\'atica e Estat\'istica~~~~~~~~~~~~~~~Instituto de  Matem\'atica e Estat\'istica\\
Universidade de S\~ao Paulo~~~~~~~~~~~~~~~~~~~~~~~~~~~~~\hspace*{0,1em}Universidade Federal de Uberl\^andia\\
05.508-090 -- S\~ao Paulo -- Brazil~~~~~~~~~~~~~~~~~~~~~~\,\hspace*{0,1em}38.400-902 -- Uberl\^andia -- Brazil\\
e-mail: mikaela\_aires@ime.usp.br~~~~~~~~~~~~~~~~~~~~~\,e-mail: botelho@ufu.br
\bigskip


%

%
%
%


\begin{thebibliography}{99}\small
\vspace*{-0.5em}

\bibitem{Alpay} Ö. Ş. Alpay, E. Emelyanov,  S. Gorokhova, {\it Duality and norm completeness in the classes of limitedly Lwc and Dunford-Pettis Lwc operators}, Turkish J. Math. {\bf 48} (2024), 267–278.

\vspace*{-0.5em}
\bibitem{Ali}C. D. Aliprantis, O. Burkinshaw, {\it Locally Solid Riesz Spaces with Applications to Economics}, Mathematical Surveys and Monographs, vol. 105, American Mathematical Society, 2003.

\vspace*{-0.5em}
\bibitem{Ali1} C. D. Aliprantis, O. Burkinshaw, {\it Positive Operators}, Springer, Dordrecht, 2006.

\vspace*{-0.5em}
\bibitem{aqzzouz} B. Aqzzouz, A. Elbour, {\it Some characterizations of almost Dunford–Pettis operators and applications}, Positivity {\bf 15} (2011), 369-380.

\vspace*{-0.5em}
\bibitem{ardvin} H. Ardakani, V. C. C. Miranda, {\it Dunford-Pettis like Sets with Applications to Spaces of Operators}, Bull. Braz. Math. Soc. (N.S.) {\bf 56} (2025), Paper no. 18, 18 pp.


\vspace*{-0.5em}
\bibitem{ardakani} H. Ardakani, Kh. Taghavinejad, {\it The strong limited $p$-Schur property in Banach lattices}, Oper. Matrices {\bf 16}(3) (2022), 811–825.

\vspace*{-0.5em}
\bibitem{ardval}  H. Ardakani, F. Vali, {\it On almost limited $p$-convergent operators on Banach lattices}, Positivity {\bf 28} (2024), Paper no. 20, 17 pp.

\vspace*{-0.5em}
\bibitem{Aron}R. M. Aron, L. Bernal-Gonz\'alez, D. Pellegrino and J. B. Seoane-Sep\'ulveda, {\it Lineability: The Search for Linearity in Mathematics}, Chapman and Hall/CRC, 2015.

\vspace*{-0.5em}
\bibitem{Arraz}  F. Arranz, J. M. F. Castillo, R. García, {\it Three-space problems for polynomial properties in Banach spaces}, Math. Proc. R. Ir. Acad. {\bf 103A} (2003), 93–100.

\vspace*{-0.5em}
\bibitem{bernal} L. Bernal-Gonz\'alez, M. C. Calder\'on-Moreno, J. L\'opez-Salazar, J. A. Prado-Bassas, {\it Modes of convergence of sequences of holomorphic functions: a linear point of view}, Mediterr. J. Math. {\bf 22} (2025), no. 2, Paper No. 55, 20 pp.


\vspace*{-0.5em}
\bibitem{bumonat} G. Botelho, Q. Bu, D. Ji, Donghai and K. Navoyan, {\it The positive Schur property on positive
projective tensor products and spaces of regular multilinear operators}, Monatsh. Math.
{\bf 197}(4) (2022), 565-578.

\vspace*{-0.5em}
\bibitem{michigan} G. Botelho, V. V. F\'avaro, {\it Constructing Banach spaces of vector-valued sequences with special properties}, Michigan Math. J. {\bf 64} (2015), 539-554.




\vspace*{-0.5em}
\bibitem{JLucasPAMS} G. Botelho, J. L. P. Luiz, {\it The positive polynomial Schur property in Banach lattices}, Proc. Amer. Math. Soc. {\bf 149} (2021), no. 5, 2147–2160.

\vspace*{-0.5em}
\bibitem{JLPL} G. Botelho, J. L. P. Luiz, {\it Complete latticeability in vector-valued sequence spaces}, Math. Nachr. {\bf 296} (2023), 523–533.

\vspace*{-0.5em}
\bibitem{GJV} G. Botelho, J. L. P. Luiz, V. C. C. Miranda, {\it Grothendieck’s compactness principle for the absolute weak topology}, J. Anal. Math. (2025), to appear (available at arXiv:2304.07416).


\vspace*{-0.5em}
\bibitem{livro} G. Botelho, D. Pellegrino, E. Teixeira, {\it Introduction to Functional Analysis}, Springer, Cham, 2025.

\vspace*{-0.5em}
\bibitem{ariel} G. Botelho, A. S. Santiago, {\it Representation of sequence classes by operator ideals}, J. Math. Anal. Appl. {\bf 542} (2025), 128755, 15pp.

\vspace*{-0.5em}
\bibitem{bu} Q. Bu, {\it On Kalton’s theorem for regular compact operators and Grothendieck property for
positive projective tensor products}, Proc. Amer. Math. Soc. {\bf 148}(6) (2020), 2459–2467.

\vspace*{-0.5em}
\bibitem{calderon} M. C. Calder\'on-Moreno, P. J. Gerlach-Mena, J. A. Prado-Bassas, {\it Infinite pointwise lineability: general criteria and applications}, Rev. R. Acad. Cienc. Exactas F\'is. Nat., Ser. A Mat., RACSAM {\bf 118} (2024), Paper No. 25, 13 p.


\vspace*{-0.5em}
\bibitem{Carne} T. K. Carne, B. Cole, T. W. Gamelin, {\it A uniform algebra of analytic functions on a Banach space}, Trans. Amer. Math. Soc. {\bf 314} (1989), 639–659.

\vspace*{-0.5em}
\bibitem{castillo} J. M. F. Castillo, {\it $p$-converging and weakly-$p$-compact operators in $L_p$-spaces}, in: Proc. II. Congreso An\'alisis Funcional Jarandilla, Extracta Math. (1990), 46–54.

\vspace*{-0.5em}
\bibitem{castillo2} J. M. F. Castillo, F. S\'anchez, {\it Dunford-Pettis-like properties of continuous vector function spaces}, Rev. Mat. Univ. Complut. Madr. {\bf 6}(1) (1993), 43–59.

\vspace*{-0.5em}
\bibitem{chen2023} D. Chen, {\it Quantitative positive Schur property in Banach lattices}, Proc. Amer. Math. Soc. {\bf 151} (2023), no. 3, 1167–1178.

\vspace*{-0.5em}
\bibitem{chen} D. Chen, J. A. Ch\'avez-Dom\'inguez, L. Li, {\it $p$-converging operators and Dunford-Pettis property of order $p$}, J. Math. Anal. Appl. {\bf 461} (2018) 1053–1066.




\vspace*{-0.5em}
\bibitem{Cima} J. A. Cima, A. Matheson, {\it Completely continuous composition operators},  Trans. Amer. Math. Soc. {\bf 344} (1994), 849–856.

 \vspace*{-0.5em}
\bibitem{diestelDP} J. Diestel, {\it A survey of results related to the Dunford-Pettis property}, Proceedings of the Conference on Integration, Topology, and Geometry in Linear Spaces (Univ. North Carolina, Chapel Hill, N.C., 1979), pp. 15–60, Contemp. Math., 2, Amer. Math. Soc., Providence, RI, 1980.

\vspace*{-0.5em}
\bibitem{Diestel} J. Diestel, {\it Sequences and Series in Banach Spaces},   Springer, 1984.

\vspace*{-0.5em}

\bibitem{diestel+jarchow+tonge} J. Diestel, H. Jarchow, A. 	Tonge, {\it Absolutely Summing Operators}, Cambridge University Press, 1995.

\vspace*{-0.5em}

\bibitem{dineen} S. Dineen, {\it Complex Analysis on Infinite Dimensional Spaces}, Springer, 1999.


\vspace*{-0.5em}
\bibitem{Farmer}  J. Farmer, W. B. Johnson, {\it Polynomial Schur and polynomial Dunford-Pettis properties}, Contemp. Math. {\bf 144} (1993), 95–105.

\vspace*{-0.5em}

\bibitem{viniciuspams} V. V. Fávaro, D. Pellegrino, A. Raposo Jr., G. Ribeiro, {\it General criteria for a stronger notion of lineability}, Proc. Amer. Math. Soc. {\bf 152} (2024), 941-954.
%



\vspace*{-0.5em}
\bibitem{Galindo} P. Galindo, V. C. C. Miranda, {\it Grothendieck-type subsets of Banach lattices}, J. Math. Anal. Appl. {\bf 506} (2022), 14 pp.

\vspace*{-0.5em}
\bibitem{Ioana2} I. Ghenciu, {\it $L$-sets and property $(SR^*)$ in spaces of compact operators}, Monatsh. Math. {\bf 181} (2016), 609–628.

\vspace*{-0.5em}
\bibitem{Ioana} I. Ghenciu, {\it A note on some isomorphic properties in projective tensor products}, Extracta Math. {\bf 32} (2017), 1-24.



\vspace*{-0.5em}
\bibitem{gon} M. Gonz\'alez, J. Guti\'errez, {\it Unconditionally converging polynomials on Banach spaces}, Math. Proc. Camb. Philos. Soc. {\bf 117} (1995), 321–331.

\vspace*{-0.5em}
\bibitem{GonzalezKania} M. Gonz\'alez, T. Kania, {\it Grothendieck spaces: the landscape and perspectives}, Japan. J. Math. {\bf 16} (2021), 247-313.

\vspace*{-0.5em}
\bibitem{Grothendieck}  A. Grothendieck. {\it Sur les applications linéaires faiblement compactes d’espaces du type C(K)}. Canadian J. Math. {\bf 5} (1953), 129–173.

\vspace*{-0.5em}
\bibitem{hagler} J. Hagler, W. Johnson, {\it On Banach spaces whose dual balls are not weak$^*$ sequentially compact}, Israel J. Math. {\bf 28} (1977), 325-330.

\vspace*{-0.5em}
\bibitem{Jaramillo} J. A. Jaramillo, A. Prieto, {\it Weak-polynomial convergence on a Banach space}, Proc. Amer. Math. Soc. {\bf 118} (1993), 463–468.

\vspace*{-0.5em}
\bibitem{Jimenez} P. Jiménez-Rodríguez, {\it On sequences not enjoying Schur’s property}, Open Math. {\bf 15} (2017), 233–237.


\vspace*{-0.5em}
\bibitem{jm} H. Junek, M. Matos, {\it On unconditionally $p$-summing and weakly $p$-convergent polynomials}, Arch. Math. (Basel) {\bf 70} (1998), 41–51.

\vspace*{-0.5em}
\bibitem{khabaoui} H. Khabaoui, J. H'michane, K. El Fahri, {\it A contribution to operators between Banach lattices}, Positivity {\bf 28} (2024), no. 5, Paper No. 65, 11 pp.



\vspace*{-0.5em}
\bibitem{liu} J. Liu, Y. Zhang, S. Shi, {\it On the fractal dimensions of continuous functions and algebraic genericity}, J. Math. Anal. Appl. {\bf 546} (2025), no. 2, Paper No. 129234, 18 pp.


\vspace*{-0.5em}

\bibitem{megginson} R. E. Megginson, {\it An Introduction to Banach Space Theory}, Springer, 1998.

\vspace*{-0.5em}
\bibitem{Meyer} P. Meyer-Nieberg, {\it Banach Lattices}, Springer, 1991.

\vspace*{-0.5em}

\bibitem{mujicaarxiv} J. Mujica, {\it Banach spaces not containing $l_1$}, Ark. Math. {\bf 41} (2003), 363-374.

\vspace*{-0.5em}

\bibitem{mujica} J. Mujica, {\it Complex Analysis in Banach Spaces}, Dover Publications, 2010.

\vspace*{-0.5em}
\bibitem{oikhberg} T. Oikhberg, {\it A note on latticeability and algebrability}, J. Math. Anal. Appl. {\bf 434} (2016), 523–537.

\vspace*{-0.5em}
\bibitem{Moussa} F. Z. Oughajji, M. Moussa, {\it Weak M weakly compact and weak L weakly compact operators}, Afr. Mat. {\bf 33} (2022), 1-15.

\vspace*{-0.5em}
\bibitem{pel} A. Pe{\l}czy\'nski, {\it Banach spaces on which every unconditionally converging operator is weakly compact}, Bull. Acad. Pol. Sci. {\bf 10} (1962), 641–648.

\vspace*{-0.5em}
\bibitem{Pellegrino} D. Pellegrino, A. Raposo Jr, {\it Pointwise lineability in sequence spaces}. Indag. Math. (N.S.) {\bf 32} (2021), 536–546

\vspace*{-0.5em}
\bibitem{Peralta} A. M. Peralta, I. Villanueva, J. D. M. Wright, K. Ylinen, {\it Topological characterisation of weakly compact operators}, J. Math. Anal. Appl. {\bf 325} no. 2 (2007), 968–974.

\vspace*{-0.5em}
\bibitem{thakare} P. Pethe, N. Thakare, {\it Note on Dunford-Pettis property and Schur property}, Indiana Univ. Math. J. {\bf 27} (1978), no. 1, 91–92.

\vspace*{-0.5em}
\bibitem{Rabiger}F. R\"{a}biger, {\it Lower and upper 2-estimates for order bounded sequences and Dunford-Pettis operators between certain classes of Banach lattices}, Lecture Notes in Math. {\bf 1470}, (1991), 159-170.

\vspace*{-0.5em}

\bibitem{anselmogeivison} A. Raposo Jr., G. Ribeiro, {\it Pointwise linear separation property and infinite pointwise dense lineability}, arXiv:2311.09110, 2024.

\vspace*{-0.5em}
\bibitem{sanchez} J. A. S\'anchez, {\it Operators on Banach lattices} (Spanish), Ph. D. Thesis, Universidad Complutense de Madrid, 1985.

\vspace*{-0.5em}
\bibitem{Wen} Y. Wen, J. Chen, {\it Characterizations of Banach spaces with relatively compact Dunford-Pettis sets}, Adv. Math. (China) {\bf 45} (2016), 122–132.

\vspace*{-0.5em}
\bibitem{Wnuk1} W. Wnuk, {\it A note on the positive Schur property}, Glasgow Math. J. {\bf 31} (1989), 169-172.

\vspace*{-0.5em}
\bibitem{Wnuk2} W. Wnuk, {\it Banach lattices with properties of the Schur type: a survey}, Conf. Sem. Mat. Univ. Bari {\bf 249} (1993), 1-25.




\vspace*{-0.5em}
\bibitem{Wnuk}W. Wnuk, {\it On the dual positive Schur property in Banach lattices}, Positivity {\bf 17} (2013), 759-773.

\vspace*{-0.5em}
\bibitem{Omid} O. Zabeti, {\it The Grothendieck property from an ordered point of view }, Positivity {\bf 26} (2022), 1-5.


%
%
%
%



%
%













%
%
%
%
%




%





%
%
%


%
%






%


%
%


%

%
%
%



%

%


%








%


%








%


%
%
%
%
%
%
%
%
%
%
%
%
%
%
%
%
%
%
%
%
%
%
%
%
%
%
%
%
%
%
%
%
%
%
%
%
%
%
%
%
%
%
%
%
%
%
%
%
%
%
%
%
%


%
%
%
%
%
%
%
%
%
%
%
%
%
%
%
%
%
%
%
%
%
%
%
%
%
%
%


%
%
%
%
%
%

%
%
%
%
%
%
%
%
%
%
%
%
%
%




\end{thebibliography}
\end{document}